\definecolor{mycitecolor}{rgb}{0,.6,0} 
\definecolor{mylinkcolor}{rgb}{0,0,.8}   
\numberwithin{equation}{section}
\newtheorem{theorem}[equation]{Theorem}
\newtheorem{proposition}[equation]{Proposition}
\theoremstyle{definition}
\newtheorem*{theorem-non}{Theorem}
\newtheorem{remark}[equation]{Remark}
\newcommand{\FPdim}{{\rm FPdim}}
\newcommand{\C}{\mathcal{C}}
\newcommand{\E}{\mathcal{E}}
\newcommand{\D}{\mathcal{D}}
\newcommand{\Z}{\mathcal Z}
\newcommand{\TY}{\mathcal{TY}}
\newcommand{\N}{\mathbb N}
\newcommand{\ot}{\otimes}
\newcommand{\B}{\mathcal{B}}
\newcommand{\A}{\mathcal{A}}
\newcommand{\M}{\mathcal{M}}
\newcommand{\one}{\mathbf{1}}
\newcommand{\mbbZ}{\mathbb{Z}}
\newcommand{\mcC}{\mathcal{C}}
\renewcommand{\(}{\left(}
\renewcommand{\)}{\right)}
\newcommand{\lcb}{\left\{}
\newcommand{\rcb}{\right\}}
\newcommand{\thmref}[1]{Theorem~\ref{#1}}
\DeclareMathOperator{\Rep}{Rep}
\DeclareMathOperator{\Hom}{Hom}
\DeclareMathOperator{\id}{id}
\DeclareMathOperator{\Irr}{Irr}
\DeclareMathOperator{\Stab}{Stab}
\DeclareMathOperator{\rev}{rev}
\renewcommand{\dim}{\operatorname{dim}}
\renewcommand{\Vec}{\operatorname{Vec}}
\newcommand{\Sum}{\displaystyle\sum}
\begin{document}

\title[Modular categories of dimension $pq^4$ and $p^2q^2$]
{Classification of integral modular categories of Frobenius-Perron dimension
$pq^4$ and $p^2q^2$}
\subjclass[2010]{18D10}

\author{Paul Bruillard}
\email{paul.bruillard@math.tamu.edu}
\address{Department of Mathematics, Texas A\&M University, College Station,
Texas 77843, USA}

\author{C\'esar Galindo}
\email{cn.galindo1116@uniandes.edu.co}
\address{Departamento de matem\'aticas, Universidad de los Andes, Bogot\'a,
Colombia}

\author{Seung-Moon Hong}
\email{seungmoon.hong@utoledo.edu}
\address{Department of Mathematics and Statistics, University of Toledo, 
Ohio 43606, USA}

\author{Yevgenia Kashina}
\email{ykashina@depaul.edu}
\address{Department of Mathematical Sciences, DePaul University, Chicago,
Illinois 60614, USA}

\author{Deepak Naidu}
\email{dnaidu@math.niu.edu}
\address{Department of Mathematical Sciences, Northern Illinois
University, DeKalb, Illinois 60115, USA}

\author{Sonia Natale \and Julia Yael Plavnik}
\email{natale@famaf.unc.edu.ar}
\email{plavnik@famaf.unc.edu.ar}
\address{Facultad de Matem\'atica, Astronom\'ia y F\'isica
Universidad Nacional de C\'ordoba, CIEM--CONICET, C\'ordoba, Argentina}
\author{Eric. C. Rowell}
\email{rowell@math.tamu.edu}
\address{Department of Mathematics, Texas A\&M University, College Station,
Texas 77843, USA}

\thanks{This project began while the authors were participating in a workshop
at the \textit{American Institute of Mathematics}, whose hospitality and
support is gratefully acknowledged. 
The research of S. N. was partially supported by CONICET and
SeCyT-UNC and was carried out in part during a stay at the IH\' ES, France,
whose kind hospitality is acknowledged with thanks.
The research of J. P. was partially supported by CONICET, ANPCyT and Secyt-UNC.
The research of E. R. was partially supported by USA NSF grant DMS-1108725.}

\begin{abstract}
We classify integral modular categories of dimension $pq^4$ and $p^2q^2$ where
$p$ and $q$ are distinct primes.  We show that such categories are always
group-theoretical except for categories of dimension $4q^2$.
In these cases there are
well-known examples of non-group-theoretical categories, coming from centers of
Tambara-Yamagami categories and quantum groups.  We show that a
non-group-theoretical integral modular category of dimension $4q^2$ is 
equivalent to either one of these well-known examples or is of dimension $36$ and is twist-equivalent to fusion categories arising from a
certain quantum group.
\end{abstract}

\maketitle

\begin{section}{Introduction}
Braided group-theoretical categories are well-understood: they are equivalent to
fusion subcategories of $\Rep(D^\omega (G))$ where $G$ is a finite group and
$\omega$ is a $3$-cocycle on $G$ \cite{Na1, O}. Fusion subcategories of
$\Rep(D^\omega (G))$ are determined by triples $(K,H,B)$ where $K$, $H$ are
normal subgroups of $G$ that centralize each other,
and $B$ is a $G$-invariant $\omega$-bicharacter  on $K \times H$
\cite[Theorem 5.11]{NNW}. Triples $(K,H,B)$ for which $HK=G$ and $B$ satisfies
a certain nondegeneracy condition determine the \textit{modular}
subcategories \cite[Proposition 6.7]{NNW}.
Moreover, braided group-theoretical categories enjoy \textit{property
\textbf{F}}: the braid group representations on endomorphism spaces have
finite image \cite{ERW}.
Our approach to the classification of integral modular categories of a
given dimension is to consider those that are group-theoretical as understood
and then explicitly describe those that are not.

For distinct primes $p,q$ and $r$, any
integral modular category of
dimension $p^n$, $pq$, $pqr$, $pq^2$ or $pq^3$ is group-theoretical by
\cite{EGO,DGNO1,NR}.  On the other hand, 
non-group-theoretical integral modular categories of dimension
$4q^2$ were constructed in \cite{GNN} and \cite{NR}.  Furthermore, there are
non-group-theoretical integral modular categories of dimension $p^2q^4$ if
$p$ is odd and $p\mid\/(q+1)$, obtained as the Drinfeld centers of Jordan-Larson
categories (see \cite[Theorem 1.1]{JL}).

If $\C$ is an integral nondegenerate braided fusion category, then the set
of modular structures on $\C$ is in bijection with the set of
isomorphism classes of invertible self-dual objects of $\C$.
Thus, we view the problem of classifying integral modular categories 
as being equivalent to the problem of classifying integral nondegenerate braided fusion categories.

In this work, we classify integral
modular
categories of dimension $pq^4$ and $p^2q^2$.  In particular we prove:
\begin{theorem}
Let $\C$ be an integral modular category.
 \begin{enumerate}
  \item If $\FPdim(\C)=pq^4$ then $\C$ is
group-theoretical.
\item If $\FPdim(\C)=p^2q^2$ is \textit{odd}, then $\C$ is group
theoretical.
\item If $\mcC$ is a non-group-theoretical category of dimension $4q^2$ then
either $\mcC\cong\mathcal{E}(\zeta,\pm)$, as braided fusion categories,
with $\zeta$ an elliptic quadratic
form on $\mathbb{Z}_q\times \mathbb{Z}_q$ or $\mcC$ is twist-equivalent to
$\mcC(\mathfrak{sl}_3,q,6)$ or to $\bar{\mcC}(\mathfrak{sl}_3,q,6)$.
 \end{enumerate}
\end{theorem}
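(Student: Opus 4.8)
The plan is to treat all three parts through one mechanism and let the arithmetic of the dimension decide the outcome. Since $pq^4$, $p^2q^2$ and $4q^2$ all have the form $p^aq^b$, every such $\C$ is solvable by Etingof--Nikshych--Ostrik, hence weakly group-theoretical, and the whole problem is to decide when ``weakly'' can be upgraded to group-theoretical and, failing that, to name the obstruction. Two standard facts for an integral modular category organize the case analysis: the divisibility $\FPdim(X)^2\mid\FPdim(\C)$ for each simple $X$ (which confines the simple dimensions to $\{1,q,q^2\}$ for $pq^4$ and to $\{1,p,q,pq\}$ for $p^2q^2$), and the centralizer duality $\C_{pt}=(\C_{ad})'$ together with $\FPdim(\C_{pt})\FPdim(\C_{ad})=\FPdim(\C)$ and $|U(\C)|=|G(\C)|$. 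The engine is \emph{condensation}: whenever $\C$ contains a nontrivial Tannakian subcategory $\Rep(G)$, the de-equivariantized core $(\Rep(G)')_G$ is again modular, of dimension $\FPdim(\C)/|G|^2$, and is group-theoretical precisely when $\C$ is. This drives an induction on $\FPdim(\C)$ whose base cases are the settled dimensions $p^n$, $pq$, $pqr$, $pq^2$ and $pq^3$; a recollection worth keeping in reserve is that a nilpotent integral modular category is a Deligne product $\boxtimes_i\C_i$ of prime-power-dimensional pieces, each group-theoretical by the $p^n$ case, and Deligne products of group-theoretical categories are group-theoretical.

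For parts (1) and (2) the goal is to show the induction always terminates at a base case. Solvability furnishes a nontrivial invertible object, and the divisibility and duality constraints then force the existence of an isotropic subgroup, hence a condensable Tannakian subcategory, at every stage where $\C$ is not already a base case. The real content is to exclude the \emph{anisotropic}, condensation-terminal configurations that are not pointed; in this range such a configuration is governed by a Tambara--Yamagami-type fusion subcategory, which carries a $\mathbb{Z}_2$-grading and therefore requires even dimension. For odd $p^2q^2$ this excludes it at once, while for $pq^4$ the dimension shape cannot match the $4q^2$ template on which such a configuration lives. In both cases the reduction runs to completion and $\C$ is group-theoretical.

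For part (3) the induction \emph{does} get stuck, and the point is that it can do so in essentially one way. A non-group-theoretical $\C$ of dimension $4q^2$ admits no Tannakian subcategory to condense; tracing this back shows that $\C$ arises by gauging the order-two inversion symmetry of a pointed modular category on $\mathbb{Z}_q\times\mathbb{Z}_q$ (which multiplies the dimension by $2^2$ to give $4q^2$), and that the absence of an isotropic subgroup forces the associated quadratic form $\zeta$ to be anisotropic, that is elliptic: a hyperbolic form would supply the missing Tannakian subcategory and render $\C$ group-theoretical. Reconstructing the associativity and braiding from this data identifies $\C$, as a braided fusion category, with one of the Tambara--Yamagami centers $\E(\zeta,\pm)$, the sign recording the two inequivalent gaugings. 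The one remaining possibility is the sporadic dimension $36$, where $q=3$ and the elliptic form on $\mathbb{Z}_3\times\mathbb{Z}_3$ is also realized by the $\ssl_3$ quantum group; there I would match $S$- and $T$-matrices against $\mcC(\ssl_3,q,6)$ and its complex conjugate and obtain an equivalence only after adjusting the twist.

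The main obstacle is the rigidity required in part (3): passing from an abstract match of fusion rules and modular data to a genuine braided (respectively twist) equivalence with the named categories. The delicate steps are proving that the elliptic form together with the sign $\pm$ determines $\C$ up to braided equivalence, so that no residual associator or braiding freedom manufactures a new category, and separating the genuinely sporadic $q=3$ family---only \emph{twist}-equivalent, not braided-equivalent, to $\mcC(\ssl_3,q,6)$---from the generic $\E(\zeta,\pm)$ series. By contrast, the difficulty in (1) and (2) is essentially combinatorial: checking that the divisibility and centralizer constraints, reinforced by the parity hypothesis in (2) and the single power of $p$ in (1), leave no non-pointed anisotropic block, so that the condensation always reaches a base case.
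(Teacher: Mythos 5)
Your reduction engine (condense Tannakian subcategories, induct on dimension, fall back on nilpotency for the prime-power pieces) matches the paper's outline, but the step that actually carries parts (1)--(2) is missing, and your substitute for it is false. You claim that any condensation-terminal, non-pointed configuration is governed by a Tambara--Yamagami-type subcategory, hence $\mathbb{Z}_2$-graded and of even dimension. That is not so: the non-group-theoretical Jordan--Larson fusion categories of dimension $pq^2$ exist for \emph{odd} $p$ whenever $p\mid q+1$, and they are $\mathbb{Z}_p$-graded, so parity excludes nothing in the odd case. What the paper actually proves (Theorem~\ref{p2q2}, Case~(viii)) is a congruence: when $\FPdim(\C_{pt})=p$, the fusion-rule analysis gives $N_{Y_r,X_i^k}^{X_j^k}=1$; coprimality of $p$ and $q$ forces $\tilde{s}_{Y_r,X_i^k}=0$ via \cite[Lemma 7.1]{ENO2} and the squared column length $(pq)^2$ of the $S$-matrix; the twist equation then yields $\sum_{j=1}^{p}\theta_{X_j^k}=0$, and feeding this into the Gauss sum with $p_+=pq$ (Ng's anomaly-freeness) gives $\frac{q-1}{p}=\sum_r\theta_{Y_r}$, an algebraic integer, whence $p\mid q-1$. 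Only this congruence, played against Jordan--Larson's $p\mid q+1$ for a putative non-group-theoretical de-equivariantization of dimension $pq^2$, closes the odd case: $p\mid 2$ contradicts $p>2$ (Theorem~\ref{impar}). Without it your induction simply does not terminate. (Smaller omissions: a simple object of dimension $pq$ in the $p^2q^2$ case is excluded because $1+(pq)^2>p^2q^2$, not by the divisibility bound alone; and Tannakian-ness of $\C_{pt}$ in the even cases requires M\"uger's lemma $g\ot X\ncong X$ plus a parity count of the two-dimensional simples, which you never address.)

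In part (3) there are two genuine errors. First, your premise that a non-group-theoretical $\C$ of dimension $4q^2$ ``admits no Tannakian subcategory to condense'' is backwards: Proposition~\ref{g-crossed} shows $\C_{pt}$ \emph{is} Tannakian precisely in the non-group-theoretical situation, condensation goes through, and when $\FPdim(\C_{pt})=2$ the de-equivariantization is a non-group-theoretical fusion category of dimension $2q^2$, hence $\TY(A,\zeta,\tau)$ with $\zeta$ elliptic by Jordan--Larson. The braided identification $\C\cong\E(\zeta,\pm)$ is then not a delicate reconstruction from the quadratic form but follows from the equivalence $\C\boxtimes(\widehat\C_e)^{\rev}\cong\Z(\TY(A,\zeta,\tau))$ together with the fact that this center has a unique pointed nondegenerate subcategory $\B$ of dimension $q^2$, so $\C\cong\B'=\E(\zeta,\pm)$ (Theorem~\ref{2q2}). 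Second, at dimension $36$ (the case $\FPdim(\C_{pt})=3$) you propose matching modular data against $\mcC(\ssl_3,q,6)$ ``and its complex conjugate''; this cannot work, because conjugation preserves fusion rules, while Proposition~\ref{36lemma} shows there are two inequivalent fusion rings, distinguished by $X^{\ot 2}\cong X^*\oplus gX^*$ versus $X^{\ot 2}\cong gX^*\oplus g^2X^*$. The second family is realized not by a conjugate but by $\bar{\mcC}(\ssl_3,q,6)$, obtained by deforming the tensor product of $\mcC(\ssl_3,q,6)$ by a $2$-cocycle $\chi$ on the $\mathbb{Z}_3$-grading, and the identification up to twist-equivalence in the first case rests on Kazhdan--Wenzl reconstruction for type $A$, not on $S$- and $T$-matrix comparison (Theorem~\ref{twist-equivalence}). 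Your sketch misses this family entirely.
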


Here, $\E(\zeta,\pm)$ are modular categories
constructed in \cite{GNN}, $\mcC(\mathfrak{sl}_3,q,6)$ are the modular
categories constructed from the quantum group $U_q(\mathfrak{sl}_3)$ where $q^2$
is a primitive $6$th root of unity, 
and $\bar{\mcC}(\mathfrak{sl}_3,q,6)$ are the fusion categories 
defined in Subsection \ref{subsection 36 modular}.
The notion of twist-equivalence is defined in Subsection \ref{subsection 36 modular}. 
The $36$-dimensional categories $\bar{\mcC}(\mathfrak{sl}_3,q,6)$ are new, and will 
be investigated further in a future work.

We hasten to point out a coarser classification of these categories has been
obtained: in \cite{ENO2} it is shown that
any fusion category of dimension $p^aq^b$ is \textit{solvable}, that is, such
categories can be obtained from the category $\Vec$ of finite dimensional
vector spaces via a sequence of extensions and equivariantizations by groups
of prime order. However, the question of whether a given category admits a
$\mathbb{Z}_p$-extension or $\mathbb{Z}_p$-equivariantization is a somewhat
subtle one (see \cite{ENO3, G1}).

\end{section}
\begin{section}{Some general results}

In this section, we will recall some general results about modular categories.
These results will be used in the sections that follow.

The Frobenius-Perron dimension $\FPdim(X)$ of a simple object $X$ in a fusion category $\C$ is
defined to be the largest positive eigenvalue of the fusion matrix
$N_X$ with entries $N_{X,Y}^Z=\dim\Hom(Z, X\otimes Y)$ of $X$, that is, the
matrix representing
$X$ in the left regular representation of the Grothendieck semiring $Gr(\C)$ of $\C$.
The Frobenius-Perron dimension $\FPdim(\C)$ of the fusion category $\C$ is defined to be
the sum of the squares of the Frobenius-Perron dimensions of
(isomorphism classes of) simple objects.
A fusion category $\C$ is called {\bf integral} if $\FPdim(X)\in \N$ for all simple objects $X$.
If $\C$ is an integral modular category, then $\FPdim(X)^2$ divides $\FPdim(\C)$
for all simple objects $X \in \C$ \cite[Lemma 1.2]{EG} (see also
\cite[Proposition~3.3]{ENO1}).  Note that integral modular categories are
\textbf{pseudounitary} \cite[Proposition~8.24]{ENO1}, that is, the
Frobenius-Perron dimension coincides with the categorical dimension.

A fusion category is said to be {\bf pointed} if all its simple
objects are invertible. A fusion category whose Frobenius-Perron dimension is
a prime number is necessarily pointed \cite[Corollary~8.30]{ENO1}.

For a fusion category $\C$, the maximal pointed
subcategory of $\C$, generated by invertible objects will be denoted by
$\C_{pt}$.  A complete set of representatives of non-isomorphic simple objects
in $\C$ will be denoted $\Irr(\C)$.
The full fusion subcategory generated by all simple subobjects of
$X \ot X^*$, where $X$ runs through all simple objects of $\C$,
is called the {\bf adjoint category} of $\C$, and it is denoted by $\C_{ad}$.
If the (finite) sequence of categories $\C\supset \C_{ad}\supset
(\C_{ad})_{ad}\supset\cdots$ converges to the trivial category $\Vec$ then
$\C$ is called \textbf{nilpotent}.  Clearly, any pointed fusion category is
nilpotent.

Two objects $X$ and $Y$ in a braided fusion category $\C$ (with braiding $c$)
are said to {\bf centralize} each other if $c_{Y, X} \circ c_{X, Y} = \id_{X \otimes Y}$.
If $\D$ is a full (not necessarily fusion)
subcategory of $\C$, then the {\bf centralizer} of $\D$ in $\C$ is
the full fusion subcategory
\[
\D' := \left\{ X \in \C \mid c_{Y, X} \circ c_{X, Y}
= \id_{X \otimes Y}, \mbox{ for all } Y \in \D \right\}.
\]
If $\C' = \Vec$ (the fusion category generated by the unit object),
then $\C$ is said to be {\bf nondegenerate}.
If $\C$ is nondegenerate and $\D$ is a full fusion subcategory of $\C$, then
$(\D')'=\D$ and by  \cite{M}
$$\FPdim(\D) \cdot \FPdim(\D') = \FPdim(\C).$$

Recall that a {\bf modular category} is a nondegenerate braided fusion category equipped
with a ribbon structure.

We record the following theorems for later use.

\begin{theorem}[{\cite[Corollary 6.8]{GN}}]
\label{GN thm}
If $\C$ is a pseudounitary modular category, then $(\C_{pt})' = \C_{ad}$.
\end{theorem}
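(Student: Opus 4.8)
The plan is to prove the inclusion $\C_{ad}\subseteq(\C_{pt})'$ directly and then force equality by a dimension count, the key input being the duality between the group $G(\C)$ of isomorphism classes of invertible objects and the universal grading group $U(\C)$ of $\C$. Recall that $\C$ carries a faithful grading by $U(\C)$ whose trivial component is exactly $\C_{ad}$, so that $\FPdim(\C_{ad})=\FPdim(\C)/|U(\C)|$, and that $\FPdim(\C_{pt})=|G(\C)|$ since $\C_{pt}$ is pointed. Throughout, pseudounitarity is what lets me identify $\FPdim$ with the categorical dimension, so that the $S$-matrix computations below actually control Frobenius--Perron dimensions.

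First I would prove $\C_{ad}\subseteq(\C_{pt})'$, equivalently (using the double-centralizer property $(\D')'=\D$, valid since $\C$ is nondegenerate) that $\C_{pt}\subseteq(\C_{ad})'$. For an invertible object $g$ and a simple object $X$, the object $g\ot X$ is again simple, so the monodromy $c_{X,g}\circ c_{g,X}$ is a scalar $\lambda_g(X)\,\id_{g\ot X}$. The assignment $X\mapsto\lambda_g(X)$ is multiplicative on tensor products and satisfies $\lambda_g(\one)=1$, whence $\lambda_g(X)\lambda_g(X^*)=1$. This says that the double braiding of $g$ with $X\ot X^*$ is the identity, i.e.\ that $g$ centralizes every simple subobject of $X\ot X^*$. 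As these generate $\C_{ad}$, we get $g\in(\C_{ad})'$, and therefore $\C_{pt}\subseteq(\C_{ad})'$.

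It remains to upgrade this to an equality, for which I would show $|U(\C)|=|G(\C)|$. The character $X\mapsto\lambda_g(X)$ of the Grothendieck ring is, by the previous paragraph, trivial on $\C_{ad}$, so it descends to a character of $U(\C)$; this produces a bilinear pairing $G(\C)\times U(\C)\to\C^\times$. Nondegeneracy in the first variable is immediate: if $\lambda_g(X)=1$ for all $X$, then $g$ centralizes all of $\C$, so $g\in\C'=\Vec$ by nondegeneracy, whence $g\cong\one$. Nondegeneracy in the second variable is the substantive point; here one uses that the scalars $\lambda_g(X)$ are controlled by the nondegenerate $S$-matrix, so that a grading component on which all invertibles act trivially must be the trivial component. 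This yields an isomorphism $U(\C)\cong\widehat{G(\C)}$, and in particular $|U(\C)|=|G(\C)|$.

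Combining these facts gives $\FPdim(\C_{ad})=\FPdim(\C)/|U(\C)|=\FPdim(\C)/|G(\C)|=\FPdim(\C)/\FPdim(\C_{pt})=\FPdim((\C_{pt})')$, the last equality by M\"uger's formula $\FPdim(\D)\FPdim(\D')=\FPdim(\C)$. Since $\C_{ad}\subseteq(\C_{pt})'$ and the two fusion subcategories have equal Frobenius--Perron dimension, they coincide. I expect the main obstacle to be the nondegeneracy of the pairing in the second variable, i.e.\ proving that no nontrivial grading component is centralized by all invertible objects; this is exactly where modularity (nondegeneracy of the braiding, hence of the $S$-matrix) is indispensable, and where pseudounitarity is needed to transfer the resulting dimension identities from categorical to Frobenius--Perron dimensions.
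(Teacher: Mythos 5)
The paper offers no proof of this statement---it is imported verbatim from [GN, Corollary 6.8]---so the benchmark is Gelaki--Nikshych's own argument, whose skeleton your outline reproduces: a monodromy pairing between the group $G(\C)$ of invertibles and $U(\C)$, nondegeneracy, and a dimension count. Your easy half is sound: for invertible $g$ the monodromy scalar $\lambda_g$ is multiplicative on fusion, $\lambda_g(X)\lambda_g(X^*)=1$, so $g$ centralizes every simple summand of $X\ot X^*$, giving $\C_{ad}\subseteq (\C_{pt})'$; and since each $\lambda_g$ is constant on components of the universal grading, it does descend to a character of $U(\C)$. The first-variable nondegeneracy via $\C'=\Vec$ is also correct.

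The genuine gap is the second-variable nondegeneracy, which you assert but do not prove, and which---as you phrase it (``a grading component on which all invertibles act trivially must be the trivial component'')---is \emph{literally equivalent} to the hard inclusion $(\C_{pt})'\subseteq\C_{ad}$: since $\lambda_g$ is constant on components, $(\C_{pt})'=\bigoplus_{u\in K}\C_u$ where $K\leq U(\C)$ is the joint kernel of the characters $\lambda_g$, and your claim is exactly $K=\{e\}$. Worse, the soft tools you assemble cannot close this: injectivity of $g\mapsto\lambda_g$ embeds $G(\C)$ into the characters of $U(\C)$ trivial on $K$, so $|G(\C)|\leq |U(\C)|/|K|$, while M\"uger's formula applied to $(\C_{pt})'$ gives $|G(\C)|=|U(\C)|/|K|$; these are consistent for \emph{every} value of $|K|$, so no dimension bookkeeping of this kind can force $|K|=1$ (all such estimates yield $|U(\C)|\geq |G(\C)|$, never the reverse). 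The missing input, which is the content of [GN, Theorem 6.2], is a genuine $S$-matrix argument: in a modular category the normalized columns $h_Z(X)=\ts_{X,Z}/\FPdim(Z)$ exhaust the ring characters of the complexified Grothendieck ring; given $\rho\in\widehat{U(\C)}$, the assignment $X\mapsto \rho(\deg X)\,\FPdim(X)$ is such a character (degrees add under fusion), hence equals $h_Z$ for a unique simple $Z$; then column orthogonality gives $\sum_X|\ts_{X,Z}|^2=\FPdim(Z)^2\sum_X\FPdim(X)^2=\FPdim(Z)^2\FPdim(\C)$, which must equal $\FPdim(\C)$, forcing $\FPdim(Z)=1$, i.e.\ $Z$ invertible with $\rho=\lambda_Z$. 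This makes $G(\C)\to\widehat{U(\C)}$ surjective, hence $|U(\C)|\leq |G(\C)|$, hence $K=\{e\}$ and $(\C_{pt})'=\C_{ad}$. Note this is also precisely where pseudounitarity is used---to identify Frobenius--Perron dimensions with the categorical dimensions appearing in the $S$-matrix (the FP character being the column of $\one$)---rather than merely to ``transfer dimension identities'' at the end as your outline suggests.
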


\begin{theorem}[{\cite[Corollary 4.14]{DGNO1}}]
\label{DGNO thm}
A modular category $\C$ is group-theoretical if and only if it is integral
and there is a symmetric subcategory $\mathcal{L}$ such that $(\mathcal{L}^\prime)_{ad}\subset\mathcal{L}$.
\end{theorem}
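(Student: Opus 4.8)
The plan is to take as known the structural characterization that a nondegenerate braided fusion category is group-theoretical if and only if it contains a \emph{Tannakian Lagrangian} subcategory, i.e.\ a fusion subcategory $\mathcal{T}\cong\Rep(G)$ with $\mathcal{T}'=\mathcal{T}$, and to prove the two implications separately. The forward implication is straightforward: a group-theoretical category is integral, and by the characterization it contains a Tannakian Lagrangian $\mathcal{L}\cong\Rep(G)$. Such an $\mathcal{L}$ is symmetric, and since $\mathcal{L}'=\mathcal{L}$ we have $(\mathcal{L}')_{ad}=\mathcal{L}_{ad}\subseteq\mathcal{L}$; thus the stated condition holds for this $\mathcal{L}$.

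For the converse I would use de-equivariantization. The first step is to reduce to the case in which $\mathcal{L}$ is Tannakian, $\mathcal{L}\cong\Rep(G)$. A priori $\mathcal{L}$ is only symmetric, hence by Deligne's theorem of the form $\Rep(G,z)$ and possibly \emph{super-Tannakian}, containing a fermion. Disposing of this fermionic case is where I expect the real difficulty to lie, and it is exactly this subtlety that leaves room for the non-group-theoretical categories of part (c); the integrality hypothesis on $\C$ is what must be exploited to carry out the reduction.

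Assume then that $\mathcal{L}\cong\Rep(G)$ is Tannakian and form the de-equivariantization $\mathcal{B}:=\C_G$, a braided $G$-crossed fusion category with $\mathcal{B}^G\cong\C$ whose trivial component is $\mathcal{B}_e=(\mathcal{L}')_G$; by Müger's theorem $\mathcal{B}_e$ is again nondegenerate because $\C$ is. Since de-equivariantization is a tensor functor compatible with the adjoint construction, $(\mathcal{B}_e)_{ad}=\bigl((\mathcal{L}')_{ad}\bigr)_G\subseteq(\mathcal{L})_G=\Vec$, the last equality holding because de-equivariantizing $\Rep(G)$ by $G$ yields $\Vec$. Hence $\mathcal{B}_e$ is pointed. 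The faithful $G$-grading forces $\mathcal{B}_{ad}\subseteq\mathcal{B}_e$, so that $\mathcal{B}$ is nilpotent; combining nilpotency with the integrality of $\mathcal{B}$ (inherited from $\C$) and the fact that every graded component $\mathcal{B}_g$ is an invertible module category over the pointed category $\mathcal{B}_e$, I would conclude that $\mathcal{B}$ is itself pointed.

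Once $\mathcal{B}$ is pointed, $\C\cong\mathcal{B}^G$ exhibits $\C$ as an equivariantization of a pointed category, and such equivariantizations are known to be group-theoretical; this completes the converse. The two genuinely nontrivial points are the reduction to the Tannakian case and the passage from ``$\mathcal{B}_e$ pointed'' to ``$\mathcal{B}$ pointed'', and in both places the hypothesis that $\C$ is integral is essential.
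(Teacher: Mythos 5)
The paper does not prove this statement; it is imported verbatim from \cite[Corollary 4.14]{DGNO1}, so your attempt can only be judged on its own merits, and it has genuine gaps in both directions. The forward direction rests on a false ``known characterization'': containing a Tannakian Lagrangian subcategory characterizes nondegenerate categories braided equivalent to the \emph{center of a pointed category}, i.e.\ to $\Rep(D^\omega G)$, whereas group-theoretical modular categories are in general only nondegenerate fusion \emph{subcategories} of such centers. Concretely, a pointed modular category $\C$ of Frobenius--Perron dimension an odd prime $p$ is group-theoretical, but $\FPdim(\C)=p$ is not a perfect square, so no Lagrangian subcategory can exist; the witness for the theorem there is instead $\mathcal{L}=\Vec$, since $\C_{ad}=\Vec$ for pointed $\C$. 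So the forward implication needs a different argument (e.g.\ realizing $\C$ inside $\Rep(D^\omega G)$ and producing $\mathcal{L}$ from $\C\cap\Rep(G)$), not the Lagrangian criterion.

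In the converse, two steps fail. First, the reduction to $\mathcal{L}$ Tannakian is flagged but never carried out, and your aside that this subtlety ``leaves room for'' the exceptional categories is off the mark: for $\E(\zeta,\pm)$ the symmetric subcategory $\C_{pt}$ \emph{is} Tannakian (this is exactly what Proposition \ref{g-crossed} of the paper proves); what fails there is the adjoint condition, not Tannakianness. Second, and fatally, the inference ``$\mathcal{B}_e$ pointed $+$ $\mathcal{B}$ nilpotent integral $+$ components invertible $\mathcal{B}_e$-module categories $\Rightarrow\mathcal{B}$ pointed'' is false: $\TY(\mbbZ_q\times\mbbZ_q,\zeta,\pm\frac{1}{q})$ is integral (simple dimensions $1$ and $q$), faithfully $\mbbZ_2$-graded with pointed trivial component, its nontrivial component is an invertible module category, and it is not pointed. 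Indeed precisely this configuration occurs inside the paper: for $\C=\E(\zeta,\pm)$ and $\mathcal{L}=\C_{pt}\cong\Rep(\mbbZ_2)$, the de-equivariantization is $\TY(A,\zeta,\tau)$ with pointed trivial component (Theorem \ref{2q2}), yet $\C$ is not group-theoretical. The point is that the only consequence of the hypothesis you extract is pointedness of $\mathcal{B}_e$, and that is strictly weaker than $(\mathcal{L}')_{ad}\subseteq\mathcal{L}$: the hypothesis says $F(X\otimes X^*)$ is trivial for every simple $X\in\mathcal{L}'$, which forces $F(X)\cong Y^{\oplus \FPdim(X)}$ for a \emph{single} invertible $Y$, i.e.\ the $G$-action on the invertibles of $\mathcal{B}_e$ is trivial --- exactly the feature that fails for $\E(\zeta,\pm)$, where $\mbbZ_2$ acts nontrivially on $\mbbZ_q\times\mbbZ_q$. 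Your final step never uses this extra strength, so the passage from ``$\mathcal{B}_e$ pointed'' to ``$\mathcal{B}$ pointed'' is an unproved (and, without the full hypothesis, false) implication; this is the missing idea, alongside the untreated super-Tannakian reduction. (The identity $(\mathcal{B}_e)_{ad}=((\mathcal{L}')_{ad})_G$ should also be an inclusion argued via surjectivity of the free-module functor, but that is a repairable slip.)
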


A {\bf grading} of a fusion category $\C$ by a finite group $G$ is a decomposition
\[
\C =\bigoplus_{g\in G}\, \C_g
\]
of $\C$ into a direct sum of full abelian subcategories such that the tensor product
$\otimes$ maps $\C_g\times \C_h$ to $\C_{gh}$ for all $g, \, h\in G$.
The $\C_g$'s are called {\bf components} of the $G$-grading of $\C$.
A $G$-grading is said to be {\bf faithful} if $\C_g\neq 0$ for all $g\in G$.
For a faithful grading, the dimensions of the components
are all equal \cite[Proposition 8.20]{ENO1}.
Every fusion category $\C$ is faithfully graded by
its universal grading group $U(\C)$ \cite{GN}, and this grading is called the
{\bf universal grading}.
The trivial component of this grading is $\C_e=\C_{ad}$,
where $e$ is the identity element of $U(\C)$.
For a modular category $\C$, the universal grading group $U(\C)$
is isomorphic to the group of isomorphism
classes of invertible objects of $\C$ \cite[Theorem 6.2]{GN}, in particular,
$\FPdim(\C_{pt})=|U(\C)|$.

Finally we recall some standard algebraic relations involving
the $S$-matrix $\tilde{S}$, twists $\theta_i$ and fusion constants
$N_{i,j}^k$ of a pseudounitary modular category. The  matrix $\tilde{S}$ is
symmetric and projectively unitary, with entries
given by the \textbf{twist equation}
\[
\tilde{S}_{i,j}=\theta_i^{-1}\theta_j^{-1}\Sum_kN_{i,j^*}
^k\theta_k\FPdim(X_k).
\]
The \textbf{Gauss sums} $p_{\pm}=\Sum_k \theta_k^{\pm 1}\FPdim(X_k)^2$ satisfy
$p_+p_-=\FPdim(\C)$.

\end{section}
\begin{section}{dimension $pq^4$}

\begin{theorem}
\label{pq^4}
Let $p$ and $q$ be distinct primes, and let $\C$ be an integral modular category of
dimension $pq^4$. Then $\C$ is group-theoretical.
\end{theorem}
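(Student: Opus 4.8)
The plan is to pin down the possible Frobenius--Perron dimensions of the simple objects and then run a case analysis on the size of the pointed part. Since $\C$ is integral and modular, $\FPdim(X)^2\mid pq^4$ for every simple $X$, and because $p\neq q$ this forces $\FPdim(X)\in\{1,q,q^2\}$. Writing $a=\FPdim(\C_{pt})=|U(\C)|$ and letting $b,c$ denote the numbers of simple objects of dimension $q$ and $q^2$, the dimension equation $a+bq^2+cq^4=pq^4$ gives $a=q^2(pq^2-b-cq^2)$, so $q^2\mid a$; combined with $a\mid pq^4$ this yields $a\in\{q^2,q^3,q^4,pq^2,pq^3,pq^4\}$. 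By Theorem~\ref{GN thm} we have $\C_{ad}=(\C_{pt})'$, so Müger's formula gives $\FPdim(\C_{ad})=pq^4/a$.

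I would dispatch the easy cases first. If $a=pq^4$ then $\C$ is pointed, hence group-theoretical. If $a=q^4$ then $\FPdim(\C_{ad})=p$ is prime, so $\C_{ad}$ is pointed and thus $\C_{ad}\subseteq\C_{pt}$, forcing $p\mid q^4$, a contradiction; this case is excluded. If $a\in\{pq^2,pq^3\}$ then $\FPdim(\C_{ad})\in\{q^2,q\}$ is a power of $q$, so $\C_{ad}$ is nilpotent; since $\C$ is a $U(\C)$-extension of $\C_{ad}$, it is itself nilpotent, and a nilpotent integral modular category is a Deligne product of prime-power-dimensional modular categories \cite{DGNO1}, each group-theoretical by the $p^n$ case \cite{EGO,DGNO1,NR}, whence $\C$ is group-theoretical.

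This leaves the degenerate cases $a=q^k$ with $k\in\{2,3\}$, where $p\mid\FPdim(\C_{ad})=pq^{4-k}$; these I expect to be the heart of the matter. Here I would work with the symmetric subcategory $\mathcal{L}=\C_{pt}\cap\C_{ad}=\C_{pt}\cap(\C_{pt})'$, the Müger center of $\C_{pt}$, and split on $d=\FPdim(\mathcal{L})$. Since $\mathcal{L}\subseteq\C_{pt}$ and $\mathcal{L}\subseteq\C_{ad}$, we get $d\mid\gcd(q^k,pq^{4-k})$, so $d$ is a small power of $q$. When $d=1$ the category $\C_{pt}$ is nondegenerate, so by Müger's decomposition $\C\cong\C_{pt}\boxtimes\C_{ad}$ with both factors modular of dimension $q^k$ and $pq^{4-k}$; each is group-theoretical (the first by the $p^n$ case, the second by the $pq$ or $pq^2$ classification \cite{EGO,DGNO1,NR}), and a Deligne product of group-theoretical categories is group-theoretical. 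When $d>1$ I would verify the criterion of Theorem~\ref{DGNO thm} for $\mathcal{L}$: the standard centralizer identities give $\mathcal{L}'=\langle\C_{pt},\C_{ad}\rangle$, and since $Y\otimes Y^*\in\C_{ad}$ for every simple $Y\in\mathcal{L}'$, the inclusion $(\mathcal{L}')_{ad}\subseteq\C_{ad}$ is automatic; what remains is to show $(\mathcal{L}')_{ad}\subseteq\C_{pt}$, i.e.\ that $\langle\C_{pt},\C_{ad}\rangle_{ad}$ is pointed. To get a foothold I would exploit the action of the group of invertibles on the non-invertible simples: the dimension equation fixes $b,c$ modulo powers of $q$, and a stabilizer count (using $|\Stab(X)|\mid\FPdim(X)^2$) produces a simple object fixed by all invertibles, which forces $\C_{pt}\subseteq\C_{ad}$ and hence that $\C_{pt}$ itself is symmetric.

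The main obstacle will be this last step in the degenerate cases: verifying that $\langle\C_{pt},\C_{ad}\rangle_{ad}$ is pointed, equivalently controlling the adjoint subcategory of the (non-modular) braided category $\C_{ad}$ of dimension $pq$ or $pq^2$. I expect this to reduce to matching the structure of $\C_{ad}$ against the known group-theoretical classifications in dimensions $pq$ and $pq^2$, and to checking that the grading and centralizer constraints leave no room for a non-pointed adjoint. This bookkeeping, rather than any single conceptual difficulty, is where the real work lies.
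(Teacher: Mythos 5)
Your reduction to the six values of $\FPdim(\C_{pt})$, and your treatment of the nilpotent cases $\FPdim(\C_{pt})\in\{pq^2,pq^3,pq^4\}$ and the exclusion of $q^4$, agree with the paper (the paper folds $q^4$ and $pq^3$ into the nilpotency argument and only notes their impossibility in a remark). But there are two genuine gaps in the remaining cases. First, you never dispose of $\FPdim(\C_{pt})=q^3$: your symmetric-subcategory machinery is not shown to do anything there, whereas the paper kills this case outright by a counting argument --- the universal grading has $q^3$ components each of dimension $pq$, the component equation $a_g+b_gq^2+c_gq^4=pq$ forces $q\mid a_g$ and $a_g\neq 0$ (else $b_gq+c_gq^3=p$), so each component carries at least $q$ invertibles, giving at least $q^4>q^3$ invertible objects, a contradiction. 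Without this (or some substitute), your case analysis is incomplete.

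Second, and more seriously, in the heart of the matter, $\FPdim(\C_{pt})=q^2$, your plan reaches ``$\C_{pt}\subseteq\C_{ad}$, hence $\C_{pt}$ symmetric'' (your fixed-point count does work here, since the number of $q$-dimensional simples is $\equiv -1 \pmod{q}$) and then stops exactly at the hard step: showing $(\mathcal{L}')_{ad}=(\C_{ad})_{ad}$ is pointed so that Theorem~\ref{DGNO thm} applies. You explicitly defer this as ``bookkeeping,'' but it is not: the criterion of Theorem~\ref{DGNO thm} is existential, and nothing guarantees that the particular choice $\mathcal{L}=\C_{pt}$ satisfies $(\mathcal{L}')_{ad}\subseteq\mathcal{L}$; controlling the adjoint subcategory of the non-modular braided category $\C_{ad}$ of dimension $pq^2$ is essentially the whole difficulty, and no known classification in dimension $pq^2$ applies to it as a degenerate braided category. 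The paper takes a different route that actually closes the case: it first upgrades ``symmetric'' to ``Tannakian'' --- automatic for odd $q$ by \cite[Corollary 2.50]{DGNO2}, but requiring a real argument for $q=2$ (if $\C_{pt}\supseteq\sVec$ with fermion $g$, then by \cite[Lemma 5.4]{mueger} $g$ acts without fixed points on the simples of $\mathcal S'$, while the dimension count shows the number of $2$-dimensional simples in $\mathcal S'$ is odd, a contradiction) --- and then de-equivariantizes by $G$ of order $q^2$ \cite{K,mueger-galois,GNN}: modularity makes the $G$-grading of $\widehat\C=\C_G$ faithful, so $\widehat\C_e$ has dimension $p$ and is pointed, $\widehat\C$ is nilpotent, \cite[Corollary 5.3]{GN} forces $\widehat\C$ itself to be pointed, and \cite[Theorem 7.2]{NNW} yields group-theoreticity. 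Note that your route, even if completed, would still need the $q=2$ Tannakian-type care somewhere; as written, the proposal leaves the decisive step unproven.
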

\begin{proof}
Since $\FPdim(X)^2$ must divide $\FPdim(\C)=pq^4$ for every simple object $X \in \C$,
the possible dimensions of simple objects are $1, \, q$, and $q^2$.
Let $a, \, b$, and $c$ denote the number of isomorphism classes of simple objects
of dimension $1, \, q$, and $q^2$, respectively. We must have
$a+bq^2+cq^4=pq^4$,
and so $q^2$ must divide $a=\FPdim(\C_{pt})$. Since the dimension of a fusion
subcategory must divide $\FPdim(\C)$, it follows that there are six possible values
for $\FPdim(\C_{pt})$: $q^2, \, q^3, \, q^4, \, pq^2, \, pq^3$, and $pq^4$.

{\bf Case (i):} $\FPdim(\C_{pt}) = q^3$.
In this case, there are $q^3$ components in the grading of $\C$ by
its universal grading group $U(\C)$, and
each component has dimension $pq$. For each $g \in U(\C)$,
let $a_g,\, b_g$, and $c_g$ denote the number of isomorphism classes of simple objects
of dimension $1, \, q$, and $q^2$, respectively, contained in the component
$\C_g$.
We must have $a_g + b_g q^2 + c_g q^4 = pq$, and so $q$ must divide $a_g$.
Note that $a_g \neq 0$ for all $g \in U(\C)$, since, otherwise, we would
have $b_g q + c_g q^3 = p$, a contradiction. Thus, each component must contain
at least $q$ (non-isomorphic) invertible objects, and since there are $q^3$
components, it follows that there must be at least $q^4$ (non-isomorphic)
invertible objects, a contradiction.

{\bf Case (ii-v):} $\FPdim(\C_{pt}) \in  \{ q^4, pq^2, pq^3,  pq^4\}$.
In each case, $\FPdim(\C_{ad})$ is a power of a prime, so $\C_{ad}$ is nilpotent
\cite{GN}. Consequently, $\C$ is also nilpotent, and since it is integral and
modular it follows that it is group-theoretical \cite{DGNO1}.

{\bf Case (vi):} $\FPdim(\C_{pt}) = q^2$.
In this case, $\FPdim (\C_{ad}) = pq^2$. This fact together with
the possibilities for the dimensions of simple objects implies that
$(\C_{ad})_{pt}$ must be of dimension $q^2$, and so $\C_{pt} \subseteq \C_{ad}$.
Hence $\C_{pt}$ is symmetric, since $\C_{ad} = (\C_{pt})'$.

\medbreak We claim that $\C_{pt}$ is a Tannakian subcategory.
This is true if $q$ is odd \cite[Corollary 2.50]{DGNO2}.
So, assume that $q = 2$, and suppose on the contrary that $\C_{pt}$ is not Tannakian. Then $\C_{pt}$ contains
a symmetric subcategory $\mathcal S$ equivalent to the category of super vector spaces.
Let $g \in \mathcal S$ be the unique nontrivial (fermionic) invertible object, and let $\mathcal S' \subseteq \C$ denote the M\" uger centralizer of $\mathcal S$.
By \cite[Lemma 5.4]{mueger}, we have $g\otimes X \ncong X$, for any simple object $X \in \mathcal S'$.

On the other hand, we have $\C_{pt} \subseteq \mathcal S'$ and  $\FPdim
(\mathcal S') = 8p$. The possibilities for the  dimensions of simple objects of
$\C$
 imply that the number of simple objects of dimension $2$ in $\mathcal S'$ is
necessarily odd.
Therefore, the action by tensor multiplication of the group of invertible objects of $\C$ on the set of isomorphism classes of simple objects of FP-dimension $2$
of $\mathcal S'$ must have a fixed point, which is a contradiction. Hence $\C_{pt}$ is Tannakian, as claimed.

\medbreak Therefore  $\C_{pt} \cong \Rep(G)$ as symmetric tensor categories,
where $G$ is a group of order $q^2$.
Let $\widehat \C : = \C_G$ denote  the corresponding de-equivariantization of
$\C$. By the main result of \cite{K, mueger-galois}, $\widehat  \C$ is a
$G$-crossed braided
fusion category (of dimension $pq^2$), and the equivariantization of $\widehat
\C$ with respect to the associated $G$-action is equivalent to $\C$ as braided
tensor categories (see \cite[Theorem 2.12]{GNN}).

Furthermore, since $\C$ is modular, the associated $G$-grading of $\widehat
 \C$ is faithful \cite[Remark 2.13]{GNN}.
Thus, the trivial component $\widehat{\C}_e \supseteq \widehat{\C}_{ad}$  of this grading
is of dimension $p$ and, in particular, it
is pointed.
Hence $\widehat \C$ is a nilpotent fusion category.

In view of \cite[Corollary
5.3]{GN}, the square of the Frobenius-Perron dimension of a simple object of
$\widehat \C$ must divide $\FPdim(\widehat{\C}_e)=p$. Since $\widehat \C$
is also integral, we see that $\widehat \C$ is itself pointed.
It follows from \cite[Theorem 7.2]{NNW} that $\C$, being an equivariantization
of a pointed fusion category, is group-theoretical.
\end{proof}

\begin{remark}
In this remark, we show that two of the four cases addressed in
Case (ii-iv) of the proof above can not occur.
If $\FPdim(\C_{pt}) = q^4$, then $\FPdim((\C_{pt})') = p$,
so $(\C_{pt})'$ must be pointed. Therefore, $(\C_{pt})'$ is contained in
$\C_{pt}$, and this implies that $p$ divides $q^4$, a contradiction.
If $\FPdim(\C_{pt}) = pq^3$, then each component in the universal grading of $\C$
has FP-dimension $q$, and so it can not accommodate a non-invertible object,
a contradiction.
\end{remark}

\end{section}
\begin{section}{dimension $p^2q^2$}
In this section, we will make repeated use of the following result.
\begin{theorem}[{\cite[Theorem 1.6 and Proposition 4.5(iv)]{ENO2}}]
\label{ENO2 nontrivial invertible}
If $p$ and $q$ are primes and $\mcC\neq\Vec$ is a fusion category of dimension
$p^{a}q^{b}$ then $\mcC$ contains a non-trivial invertible object.
\end{theorem}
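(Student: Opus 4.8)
The statement is the categorical analogue of Burnside's classical $p^aq^b$ theorem, and the plan is to deduce it from two facts established in \cite{ENO2}: first, that every fusion category of dimension $p^aq^b$ is \emph{solvable}, and second, that a nontrivial solvable fusion category necessarily contains a nontrivial invertible object. I would prove the second, easier fact by induction on the length of a solvable series $\Vec = \C_0, \dots, \C_n = \C$, in which each $\C_i$ is obtained from $\C_{i-1}$ by a $\mbbZ/p$-equivariantization or a $\mbbZ/p$-extension. Inspecting the final step: if $\C$ is an equivariantization $\D^{\mbbZ/p}$, then $\Rep(\mbbZ/p)$ embeds as a subcategory and already supplies $p-1$ nontrivial invertible objects; if instead $\C = \bigoplus_{g\in\mbbZ/p}\C_g$ is a $\mbbZ/p$-extension with trivial component $\D = \C_e$, then either $\D \neq \Vec$ and the inductive hypothesis produces an invertible object inside $\D \subseteq \C$, or $\D = \Vec$, in which case $\FPdim(\C) = p$ is prime and $\C$ is therefore pointed. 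This disposes of the second fact.

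The real content is the first fact, the categorical Burnside theorem, where the plan is to reduce to modular categories and then imitate the representation-theoretic proof for groups. Since solvability is a Morita invariant and is inherited from the Drinfeld center $\Z(\C)$, which is modular of dimension $\FPdim(\C)^2 = p^{2a}q^{2b}$, it suffices to prove solvability for modular categories of such dimension. Concentrating on the integral case (to which the general one reduces), I would induct on $\FPdim(\M)$. If $\M$ is pointed it is $\Vec_G^\omega$ for a group $G$ of order $p^aq^b$; such $G$ is solvable by the classical Burnside theorem, and pointed categories over solvable groups are solvable, so we are done. If $\M$ is not pointed, I choose a noninvertible simple object $X$; by the divisibility $\FPdim(X)^2 \mid \FPdim(\M)$ its dimension is $p^iq^j$ with $i+j \geq 1$. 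The goal is to manufacture from $X$ a proper nontrivial fusion subcategory — or better, a nontrivial Tannakian subcategory $\Rep(H) \subseteq \M$ allowing a de-equivariantization to $\M_H$ of strictly smaller dimension. Because solvability is stable under passing to fusion subcategories, M\"uger centralizers, and (de-)equivariantizations by solvable groups, any such reduction closes the induction.

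The main obstacle is exactly this production of a proper nontrivial subcategory in the non-pointed case. In the group setting one uses that $\lvert K\rvert\chi(g)/\chi(1)$ is an algebraic integer to force, from a conjugacy class of prime-power size, a nontrivial normal subgroup; the categorical analogue replaces characters by the columns of the $S$-matrix $\tilde S$ and exploits the Galois symmetry of the modular data together with the divisibility above to extract the required subcategory. I expect this Galois-theoretic step to be the crux: everything preceding it (the center reduction) and following it (the equivariantization bookkeeping of the first paragraph) is comparatively routine.
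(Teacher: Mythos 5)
Your proposal matches the paper's treatment exactly: the paper gives no proof of this statement, citing precisely the two results you identify --- \cite[Theorem 1.6]{ENO2} (solvability of fusion categories of dimension $p^aq^b$) and \cite[Proposition 4.5(iv)]{ENO2} (a solvable fusion category $\neq \Vec$ contains a nontrivial invertible object) --- and your combination of them is the intended argument. Your inductive proof of the second fact (equivariantization step via $\Rep(\mbbZ_p) \subseteq \D^{\mbbZ_p}$, extension step via the trivial component or primality of $\FPdim$) is correct and is essentially ENO2's own, while the categorical Burnside theorem, which you rightly flag as the crux and only sketch, is exactly the part the paper also takes as a black box from \cite{ENO2}.
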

\begin{theorem}
\label{p2q2}
Let $p<q$ be primes, and let $\mcC$ be an integral modular category of dimension $p^{2}q^{2}$. Then one of the following is true:
\begin{enumerate}
\item $\mcC$ is group-theoretical.
\item $p=2$, $q=3$, and $\FPdim\(\mcC_{pt}\)=3$

\item $p\mid q-1$ and $\FPdim\(\mcC_{pt}\)=p$.
\end{enumerate}
\end{theorem}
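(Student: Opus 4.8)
The plan is to argue from the Frobenius--Perron dimensions of the simple objects, exactly as in the proof of \thmref{pq^4}. Since $\FPdim(X)^2 \mid \FPdim(\mcC) = p^2q^2$, every simple object has dimension in $\{1,p,q,pq\}$, and writing $a = \FPdim(\mcC_{pt}) = |U(\mcC)|$, the dimension count together with the fact that the dimension of a fusion subcategory divides $p^2q^2$ shows $a \mid p^2q^2$; by \thmref{ENO2 nontrivial invertible} we also have $a>1$. Whenever $\FPdim(\mcC_{ad}) = p^2q^2/a$ is a prime power --- that is, for $a \in \{p^2, q^2, p^2q, pq^2, p^2q^2\}$ --- the category $\mcC_{ad}$ is nilpotent \cite{GN}, hence so is $\mcC$, and an integral nilpotent modular category is group-theoretical \cite{DGNO1}; all of these fall under alternative (1). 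This leaves the three cases $a \in \{p, q, pq\}$, treated separately.

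In each remaining case I would first pin down $\mcC_{pt}$. If $\mcC_{pt}$ were nondegenerate it would split off a Deligne factor $\mcC \cong \mcC_{pt}\boxtimes\mcC_{ad}$ in which $\mcC_{ad}$ is a nontrivial modular category carrying no nontrivial invertible object, contradicting \thmref{ENO2 nontrivial invertible}; hence the radical $\mcC_{pt}\cap\mcC_{ad}$ of the quadratic form on $\mcC_{pt}$ is nontrivial. For $a=p$ and $a=q$ this forces $\mcC_{pt}$ to be symmetric, and since $q$ is always odd (and in the relevant branch $p$ is odd too), $\mcC_{pt}\cong\Rep(\mathbb{Z}_q)$ or $\Rep(\mathbb{Z}_p)$ is Tannakian. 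The M\"uger center of $\mcC_{ad}$ is then exactly $\mcC_{pt}$, so de-equivariantizing produces a pointed modular category $\mathcal{B}=(\mcC_{ad})_{\mathbb{Z}_\ell}$ (with $\ell=q$ or $\ell=p$) of dimension $p^2$ resp.\ $q^2$, and $\mcC_{ad}$ is recovered as the equivariantization $\mathcal{B}^{\mathbb{Z}_\ell}$.

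The decisive step is a count of invertibles in this equivariantization. Because $(\mcC_{ad})_{pt}=\mcC_{pt}$ has exactly $\ell$ objects, and each nonzero fixed point of the $\mathbb{Z}_\ell$-action on the group $\Gamma$ underlying $\mathcal{B}$ would contribute $\ell$ further invertibles while free orbits contribute higher-dimensional simples, the action of $\mathbb{Z}_\ell$ on $\Gamma$ must have no nonzero fixed point. For $a=q$ this means $\mathbb{Z}_q$ acts freely on a group of order $p^2$, which forces $q\mid p^2-1$ and hence, as $q>p$, $q=p+1$, i.e.\ $(p,q)=(2,3)$ --- giving alternative (2). For $a=p$ it means $\mathbb{Z}_p$ acts freely on a group of order $q^2$, forcing $p\mid q^2-1$ (if $p\nmid q^2-1$ the case does not occur); when $p=2$ we have $2\mid q-1$ automatically, and when $p$ is odd with $p\mid q-1$ we land in alternative (3). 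The case $a=pq$ I would dispatch through the radical of $\mcC_{pt}\cong\mathbb{Z}_{pq}$: when it has prime order the complementary pointed subcategory is nondegenerate and splits off a Deligne factor, leaving a modular category of dimension $pq^2$ (group-theoretical by the results recalled in the Introduction \cite{NR}) or one of dimension $p^2q$, the latter excluded by the arithmetic obstruction that no modular category of dimension $p^2q$ has exactly $p$ invertibles; when the radical is all of $\mcC_{pt}$ the category is Tannakian and de-equivariantization collapses $\mcC_{ad}$ to a pointed category, so \thmref{DGNO thm} applies. Thus $a=pq$ always yields alternative (1).

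The main obstacle is the residual branch $a=p$ with $p$ odd, $p\mid q+1$ but $p\nmid q-1$, which the theorem asserts is group-theoretical. Here the free $\mathbb{Z}_p$-action on $\Gamma\cong\mathbb{F}_q^2$ is irreducible (a non-split torus element), so $\mathcal{B}$ carries no $\mathbb{Z}_p$-invariant isotropic subgroup and the naive choice $\mathcal{L}=\mcC_{pt}$ fails the criterion of \thmref{DGNO thm}. I would attack this by analyzing the full $\mathbb{Z}_p$-crossed braided de-equivariantization $\mcC_{\mathbb{Z}_p}$ of dimension $pq^2$, whose trivial component is the pointed category $\mathcal{B}$, and showing that its remaining components are pointed as well --- so that $\mcC$, being an equivariantization of a pointed category, is group-theoretical by \cite{NNW}; should a direct structural argument not suffice, the fallback is to derive a contradiction with the modularity of $\mcC$ from the Galois constraints on its $S$-matrix, whose eigenvalues tie the arithmetic to the residue of $q$ modulo $p$ and should exclude the purely $p\mid q+1$ possibility. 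I expect this distinction between the group-theoretical $p\mid q+1$ case and the exceptional $p\mid q-1$ case to be the delicate heart of the argument. Separately, the $p=2$ analysis must accommodate the possibility $\mcC_{pt}\cong\sVec$, where de-equivariantization is unavailable and the genuine non-group-theoretical examples $\mathcal{E}(\zeta,\pm)$ arise; for the present trichotomy, however, this causes no difficulty, since $a=2$ already places us in alternative (3).
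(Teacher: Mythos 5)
Your casework skeleton is sound and largely parallels the paper's: the five cases where $\FPdim(\mcC_{ad})$ is a prime power are handled identically (nilpotent, hence group-theoretical by \cite{GN, DGNO1}), and your treatments of $\FPdim(\mcC_{pt})=q$ and $\FPdim(\mcC_{pt})=pq$ reach correct conclusions by a different route: the paper does direct Diophantine counts in the universal grading (showing $\FPdim(\mcC_{pt})=pq$ is outright impossible via an invertible-object count over the $pq$ components, and getting $c_e q=(p-1)(p+1)$ directly in the case $\FPdim(\mcC_{pt})=q$), whereas you split off Deligne factors via the radical of the form on $\mcC_{pt}$ and count fixed points of the $\mathbb{Z}_\ell$-action on the pointed de-equivariantization; that is a valid alternative, modulo two small unproved assertions that do check out (simples of dimension $pq$ are impossible since $1+(pq)^2>p^2q^2$, and no integral modular category of dimension $p^2q$ has exactly $p$ invertibles, because a nontrivial component would force $pq=b_gp^2$), and modulo the gloss that for $p=2$ the radical-equals-everything subcase of $a=pq$ need not be Tannakian --- though there $\mcC_{ad}=\mcC_{pt}$ is symmetric, so Theorem~\ref{DGNO thm} with $\mathcal{L}=\mcC_{pt}$ applies regardless.

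The genuine gap is exactly where you flag it, and your proposal does not close it: in the decisive case $\FPdim(\mcC_{pt})=p$ with $p$ odd, your orbit-counting yields only $p\mid q^2-1$, and the branch $p\mid q+1$, $p\nmid q-1$ is left to two speculative strategies, neither carried out. The first (show all components of $\widehat{\mcC}$ are pointed) cannot succeed by structural counting alone: if $\mcC$ were non-group-theoretical in that branch, $\widehat{\mcC}$ would be nilpotent of type $(1,q^2;q,p-1)$ --- precisely the Jordan--Larson shape, which exists as a fusion category exactly when $p\mid q+1$ \cite{JL} --- so only genuinely modular, arithmetic input can rule it out. The paper supplies this input and thereby proves $p\mid q-1$ \emph{unconditionally}, so the troublesome branch never arises at all: it first pins down the fusion rules ($U(\mcC)$ stabilizes each $p$-dimensional object $Y_r$, acts freely on the $q$-dimensional objects $X_i^k$, and $N_{Y_r,X_i^k}^{X_j^k}=1$), then invokes \cite[Lemma 7.1]{ENO2} for the coprime dimensions $p$ and $q$ to force $\tilde{s}_{Y_r,X_i^k}=0$ (the alternative $|\tilde{s}_{Y_r,X_i^k}|=pq$ being excluded by the squared column length $(pq)^2$ of $\tilde{S}$), feeds this into the twist equation to obtain $\sum_{j=1}^p\theta_{X_j^k}=0$, and finally computes the Gauss sum $p_+=p\bigl(1+p\sum_r\theta_{Y_r}\bigr)$; since $p_+=pq$ by Ng's anomaly-freeness result \cite{Ng}, the quantity $(q-1)/p=\sum_r\theta_{Y_r}$ is an algebraic integer, whence $p\mid q-1$. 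Your second fallback (``Galois constraints on the $S$-matrix'') gestures toward this twist/Gauss-sum mechanism but is not an argument; until you supply its equivalent, alternative (c) of Theorem~\ref{p2q2} is not established and the proposal remains incomplete at its acknowledged ``delicate heart.''
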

\begin{proof}

Since $\FPdim\(X\)^{2}$ must divide $\FPdim\(\mcC\)=p^2q^2$
for every simple object $X\in \mcC$,
the possible dimensions of the simple objects are $1$, $q$, and
$p$. Since $\mcC_{pt}$ is a fusion subcategory of $\mcC$ we know that
$\FPdim\(\mcC_{pt}\) \mid p^2q^2$, and hence
$\FPdim\(\mcC_{pt}\)\in\lcb1,p,q,p^{2},q^{2}, pq, pq^{2}, p^{2}q,
p^{2}q^{2}\rcb$. Applying \thmref{ENO2 nontrivial invertible}, we conclude that
$\FPdim\(\mcC_{pt}\)>1$. The proof now proceeds by cases based on
$\FPdim\(\mcC_{pt}\)$. For each $g \in U(\C)$, let $a_{g}$,
$b_{g}$, and $c_{g}$ denote the number of isomorphism classes of
simple objects of dimension $1$, $p$, and $q$ in the component $\mcC_{g}$,
respectively. Let $e$ denote the identity element of $U(\C)$.

\textbf{Case (i-v):} $\FPdim\(\mcC_{pt}\) \in \{p^{2}q^{2},pq^{2}, p^2q, p^2,
q^2\}$.
In each case, $\FPdim(\C_{ad})$ is a power of a prime, so $\C_{ad}$ is nilpotent
\cite{GN}. Consequently, $\C$ is also nilpotent, and since it is modular it
follows that
it is group-theoretical \cite[Corollary 6.2]{DGNO1}.


\textbf{Case (vi):} $\FPdim\(\mcC_{pt}\)=pq$. In this case, $\FPdim\(\mcC_{g}\)=pq$ for all $g\in U\(\mcC\)$. Since $p<q$ we immediately conclude that $c_{g}=0$ for all $g \in U(\C)$ and thus $a_{g}\neq 0$ from the equation $pq=a_{g}+b_{g}p^{2}$. By \thmref{ENO2 nontrivial invertible}, we know that $\mcC_{ad}$ contains a non-trivial invertible object. Since there are $pq$ components, the number of invertible objects is at least $pq+1$, a contradiction.

\textbf{Case (vii):} $\FPdim\(\mcC_{pt}\)=q$. In this case, $\FPdim\(\mcC_{g}\)=p^{2}q$ for all $g\in U\(\mcC\)$. We can apply \thmref{ENO2 nontrivial invertible} to $\mcC_{ad}$ to deduce that $\mcC_{pt}\subset\mcC_{ad}$.
Examining the dimension of $\mcC_{ad}$ we have $p^{2}q=q+b_{e}p^{2}+c_{e}q^{2}$.
So, $q$ must divide $b_{e}$, and hence $b_{e}=0$. Consequently, $c_{e}q=\(p-1\)\(p+1\)$.
Since $p<q$ we must have $p=2$ and $q=3$.

\textbf{Case (viii):} $\FPdim\(\mcC_{pt}\)=p$. As in case (vii) we immediately
conclude that $\mcC_{pt}\subset\mcC_{ad}$, and $\FPdim\(\mcC_{g}\)=pq^{2}$ for
all $g\in U\(\mcC\)$. Examining the dimension of $\mcC_{ad}$ we have
$pq^{2}=p+b_{e}p^{2}+c_{e}q^{2}$. Therefore, $c_{e}=0$ and
$b_{e}=\frac{q^{2}-1}{p}$. Similar analysis in the nontrivial components reveals
that $b_{g}=0$ and $c_{g}=p$. We will identify the simple objects of
$\mcC_{pt}$ with the elements of $U(\C)$ and denote them by $g^{k}$, ordered such
that $g^{k}\otimes g^{\ell}=g^{k+\ell}$ (exponents are computed modulo $p$). We
will denote the objects of dimension $p$ by $Y_{r}$ and the objects of dimension
$q$ in the $\mcC_{g^{k}}$ component by $X_{i}^{k}$.

We will show that $p\mid q-1$. This is
immediate in the case that $p=2$, so we will assume that $p\geq 3$. We first
need to determine some of the fusion rules. Denote by
$\Stab_{U(\C)}\(Y_{r}\)$ the stabilizer of the object $Y_r$ under the tensor
product action of $g^j\in\C_{pt}$. Computing the dimension of
\begin{align*}
Y_{r}\otimes Y_{r}^{*}=\bigoplus_{h\in \Stab_{U(\C)}\(Y_{r}\)}h\oplus
\bigoplus_{s=1}^{\frac{q^{2}-1}{2}}N_{Y_{r},Y_{r}^{*}}^{Y_{s}}Y_{s}
\end{align*}
 we see that $p$ must divide $|\Stab_{U(\C)}\(Y_{r}\)|$ and hence
$\Stab_{U(\C)}\(Y_{r}\)=U(\C)$. An analogous argument shows that
$\Stab_{U(\C)}\(X_{i}^{k}\)$ is trivial for all $i$ and $k$. In particular, the
action of $U(\C)$ on $\mcC_{g^{k}}$ is fixed-point free and so we may
relabel such that $g\otimes X_{i}^{k}=X_{i+1}^{k}$ (with indices computed modulo
$p$). These results about the stabilizers allow us to compute
$N_{Y_{r},X_{i}^{k}}^{X_{j}^{k}}$ as follows
\begin{align*}
\bigoplus_{j=1}^{p}N_{Y_{r},X_{i}^{k}}^{X_{j}^{k}}X_{j}^{k}&=Y_{r}\otimes X_{i}^{k}=\(g^{\ell}\otimes Y_{r}\)\otimes X_{i}^{k}=Y_{r}\otimes X_{i+\ell}^{k}=\bigoplus_{j=1}^{p}N_{Y_{r},X_{i+\ell}^{k}}^{X_{j}^{k}}X_{j}^{k}.
\end{align*}
Since this must hold for all $\ell$ we can conclude that
$N_{Y_{r},X_{i}^{k}}^{X_{j}^{k}}=N_{Y_{r},X_{h}^{k}}^{X_{j}^{k}}$ for all
$r,h,i,j$, and $k$. A dimension count gives
$N_{Y_{r},X_{i}^{k}}^{X_{j}^{k}}=1$.

Denote by $\tilde{S}$ the $S$-matrix of $\C$, and the
entries by $\tilde{S}_{A,B}$ (normalized so that $\tilde{s}_{\one,\one}=1$).
Since $\FPdim(Y_r)=p$ and $\FPdim(X_i^k)=q$ are coprime, \cite[Lemma 7.1]{ENO2}
implies that either $\tilde{s}_{Y_{r},X_{i}^{k}}=0$ or
$|\tilde{s}_{Y_{r},X_{i}^{k}}|=pq$.  Since the columns of $\tilde{S}$ have
squared-length $(pq)^2$ we must have $\tilde{s}_{Y_{r},X_{i}^{k}}=0$.

 We compute $\tilde{s}_{Y_{r},X_{i}^{k}}$ another way using the fusion rules
above and the twist equation to conclude that
$0=\Sum_{j=1}^{p}\theta_{X_{j}^{k}}$.
The vanishing of this sum allows us to compute the Gauss sums as follows.
\begin{align*}
p_{+}&=\Sum_{Z\in
\Irr\(\mcC\)}\theta_{Z}\FPdim(Z)^{2}=p\(1+p\Sum_{r=1}^{\frac{q^{2}-1}{2}}
\theta_{ Y_ { r}}\)
\end{align*}
However, \cite[Proposition 5.4]{Ng} shows that $\mcC$ is anomaly free and in
particular that $p_{+}=pq$. From this it immediately follows that
\begin{align*}
\frac{q-1}{p}&=\Sum_{r=1}^{\frac{q^{2}-1}{p}}\theta_{Y_{r}}.
\end{align*}
The right hand side of this equation is an algebraic integer and so we
conclude that $p$ must divide $q-1$.
\end{proof}

\begin{remark}
In this remark, we show that two of the five cases addressed in
Case (i-v) of the proof above can not occur.
If $\FPdim\(\mcC_{pt}\)=p^{2}$, then $\FPdim\(\mcC_{ad}\)=q^{2}$,
so applying \thmref{ENO2 nontrivial invertible} we see that there is a
non-trivial invertible object in $\mcC_{ad}$. Therefore, $a_{e}\geq2$ and
$a_{e}\mid q^{2}$. On the other hand, the invertible objects in $\mcC_{ad}$
will form a fusion subcategory of $\mcC_{pt}$, and so $a_{e}\mid p^{2}$,
a contradiction. A similar argument shows that the case
$\FPdim\(\mcC_{pt}\)=q^{2}$ can not occur.
\end{remark}

Next, we recall a general fact about modular categories.
Let $\mcC$ be a modular category and suppose that it
contains a Tannakian subcategory $\E$. Let $G$ be
a finite group such that $\E
\cong \Rep(G)$, as symmetric categories. The de-equivariantization $\C_G$ is
a braided $G$-crossed fusion category of Frobenius-Perron dimension $\FPdim
(\C)/|G|$ (see \cite{K, mueger-galois}).

Since $\C$ is modular, the associated $G$-grading of $\C_G$ is faithful and
the trivial component $\C_G^0$ is a modular category of Frobenius-Perron
dimension $\FPdim
(\C)/|G|^2$. Furthermore, as a consequence of \cite[Corollary 3.30]{DMNO} we have
an equivalence of braided fusion categories
\begin{equation}\label{equiv} \C \boxtimes (\C_G^0)^{\rev} \cong \Z(\C_G).
\end{equation}
Notice that this implies that $\C$ is group-theoretical if and only if $\C_G$ is
group-theoretical \cite[Proposition 3.1]{witt-wgt}.

\begin{proposition}\label{g-crossed} Let $p<q$ be prime numbers. Let $\mcC$
be an integral modular category of dimension $p^{2}q^{2}$ and let $G\cong U(\C)$
be the group
of invertible objects of $\mcC$.  Suppose that $\mcC$ is
not group-theoretical. Then there exists  a
$G$-crossed braided fusion category  $\widehat  \C$
such that the equivariantization of $\widehat
\C$ with respect to the associated $G$-action is equivalent to $\C$ as braided
fusion categories. The corresponding $G$-grading on $\widehat  \C$ is
faithful, the trivial component $\widehat  \C_e$ is a modular
category of Frobenius-Perron dimension $\FPdim
(\C)/|G|^2$, and there is an equivalence of braided fusion categories
\begin{equation} \mcC \boxtimes (\widehat  \C_e)^{\rev} \cong \Z(\widehat  \C).
\end{equation}
Moreover, $\widehat \C$ is not
group-theoretical.
\end{proposition}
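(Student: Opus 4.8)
The plan is to obtain $\widehat{\C}$ as a de-equivariantization $\C_G$, so that \emph{all} assertions except the identification of the group follow from the machinery recalled in the paragraph preceding the statement: once we exhibit a Tannakian subcategory $\E \cong \Rep(G)$ with $G \cong U(\C)$, we set $\widehat{\C} := \C_G$ and read off the faithful $G$-grading, the modularity of $\widehat{\C}_e$ together with $\FPdim(\widehat{\C}_e) = \FPdim(\C)/|G|^2$, and the equivalence $\mcC \boxtimes (\widehat{\C}_e)^{\rev} \cong \Z(\widehat{\C})$ directly from \eqref{equiv}. The final clause, that $\widehat{\C}$ is not group-theoretical, is then immediate from the hypothesis on $\C$ and the fact that $\C$ is group-theoretical if and only if $\C_G$ is \cite{witt-wgt}. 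Thus the only real content of the proposition is the claim that the maximal pointed subcategory $\C_{pt}$ is Tannakian, equal to $\Rep(G)$ for $G \cong U(\C)$.

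To prove that claim I would first reduce to a single hard case. Since $\C$ is not group-theoretical, \thmref{p2q2} forces us into case (2) or case (3), so that $\FPdim(\C_{pt})$ is a prime ($q=3$, respectively $p$); moreover, exactly as in the proof of that theorem, applying \thmref{ENO2 nontrivial invertible} to $\C_{ad}$ yields $\C_{pt} \subseteq \C_{ad}$. Because $\C_{ad} = (\C_{pt})'$ by \thmref{GN thm}, this shows $\C_{pt}$ is symmetric. By \cite[Corollary 2.50]{DGNO2} a symmetric fusion category of odd Frobenius-Perron dimension is Tannakian, which settles case (2) and case (3) with $p$ odd: in each of these $\C_{pt} \cong \Rep(G)$ with $|G| = \FPdim(\C_{pt})$ and $G \cong U(\C)$.

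The remaining case, $p=2$ (so $\FPdim(\C)=4q^2$ and $\FPdim(\C_{pt})=2$), is the main obstacle, since a priori $\C_{pt}$ could be $\sVec$ rather than $\Rep(\mathbb{Z}_2)$; note that the naive free-action count fails here, as the number of simple objects of dimension $2$ turns out to be even. I would instead argue by contradiction using the decomposition of tensor squares. Suppose $\C_{pt} = \S \cong \sVec$ with fermion $g$; then $\S' = (\C_{pt})' = \C_{ad}$, and \cite[Lemma 5.4]{mueger} gives $g \otimes X \ncong X$ for every simple $X \in \C_{ad}$. On the other hand, the structure established in case (viii) of the proof of \thmref{p2q2} (valid already for $p=2$) shows that every simple object $Y$ of dimension $2$ lies in $\C_{ad}$ and that $\C_{ad}$ contains no simple object of dimension $q$; hence in $Y \otimes Y^*$ the invertible constituents are precisely the elements of $\Stab_{U(\C)}(Y)$, while every non-invertible constituent has dimension $2$. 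Comparing Frobenius-Perron dimensions gives $4 = |\Stab_{U(\C)}(Y)| + 2m$ for some integer $m \geq 0$, so $2 \mid |\Stab_{U(\C)}(Y)|$ and therefore $g \in \Stab_{U(\C)}(Y)$, i.e. $g \otimes Y \cong Y$ — contradicting \cite[Lemma 5.4]{mueger}. Hence $\C_{pt} \cong \Rep(\mathbb{Z}_2)$ is Tannakian in this case as well.

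With $\C_{pt}$ Tannakian in every case, the reduction of the first paragraph applies with $\E = \C_{pt}$, producing $\widehat{\C}=\C_G$ with all the asserted properties. Everything outside the $p=2$ analysis is a bookkeeping application of the de-equivariantization formalism recalled above; the delicate step is the exclusion of the super-Tannakian possibility just described.
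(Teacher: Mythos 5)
Your proposal is correct and follows essentially the same route as the paper: reduce everything to showing $\E=\C_{pt}$ is Tannakian, use Theorem~\ref{p2q2} together with Theorem~\ref{ENO2 nontrivial invertible} to get $\C_{pt}\subseteq\C_{ad}=(\C_{pt})'$ (hence symmetric, hence Tannakian when $\FPdim(\C_{pt})$ is odd), and in the $p=2$ case derive a contradiction with \cite[Lemma 5.4]{mueger} by showing via the decomposition of $Y\otimes Y^*$ that every two-dimensional simple object of $\C_{ad}$ has even, hence full, stabilizer under $U(\C)$. The only (harmless) deviation is that you exclude $q$-dimensional simples from $\C_{ad}$ directly by the dimension count from case (viii) of the proof of Theorem~\ref{p2q2}, whereas the paper allows that possibility and disposes of it through \cite[Proposition 7.4]{ENO2}, producing a Tannakian subcategory of the slightly degenerate $\E'$ that must contain $\C_{pt}$.
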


\begin{proof} In view of the preceding comments, it will be enough to show that
the category $\E = \C_{pt}$ is a Tannakian
fusion subcategory.

By Theorem \ref{p2q2}, we may assume that  $|G| = p$ or $\FPdim (\mcC) =
36$ and $|G| =
3$. Let $\D \subseteq \C$ be a nontrivial fusion subcategory.
Since the order of $G$ is a prime number, it follows from Theorem~\ref{ENO2
nontrivial invertible} that $\C_{pt} \subseteq \D$, so $\D_{pt} = \C_{pt} =
\E$.
In particular, $\C_{pt} \subseteq \C_{ad} = \C_{pt}'$ and therefore $\C_{pt}$ is
symmetric. If the order of $G$ is odd, this implies that $\E$ is Tannakian.

So, we may assume that $|G| = 2$.
Suppose on the contrary that $\E$ is not
Tannakian. Then $\E$ is equivalent, as a symmetric category, to the category
$\operatorname{sVec}$ of finite-dimensional super vector spaces.
 Therefore $\E'$ is a
slightly degenerate fusion category of Frobenius-Perron dimension $2q^2$. Let $g \in \E$ be the unique nontrivial invertible object.
By \cite[Lemma 5.4]{mueger}, we have $g\otimes X \ncong X$, for any simple
object $X \in \mathcal \E'$.

The possible Frobenius-Perron dimensions of simple
objects of $\C$ in this case are $1$, $2$, and $q$. This leads to the equation
$\FPdim (\E') = 2q^2 = 2 + 4 a + q^2b$, where $a, b$ are non-negative integers.
If $b \neq 0$, then $\E'$ contains a Tannakian subcategory $\mathcal B$, by
\cite[Proposition 7.4]{ENO2}.
Hence, in this case, $\E$ is Tannakian, since $\E \subseteq \mathcal B$.

Otherwise, if $b=0$, every non-invertible simple object $X$ of $\E'$ is of Frobenius-Perron
dimension $2$ and
therefore the stabilizer $\Stab_G(X)$ of any such object under the action of
the
group $G$ by tensor multiplication is not trivial, as follows from the relation
$$X \otimes X^* \cong \bigoplus_{g \in \Stab_G(X)}g \oplus
\bigoplus_{Y\in\Irr(\C)\setminus \Irr(\C_{pt})} N_{X,X^*}^Y
Y.$$
Then we see that the action
of $G$
on the set of isomorphism classes of simple objects of Frobenius-Perron dimension
$2$
of $\E'$ must be trivial, which is a contradiction. Hence $\E$
is Tannakian, as claimed.
\end{proof}

\begin{remark} Keep the notation in Proposition \ref{g-crossed}. Suppose that
$\C$ is not group-theoretical and $\FPdim (\C_{pt}) = p < q$. Then
$\FPdim(\widehat \C) = pq^2$ and $\widehat \C_e$ is a modular category of
Frobenius-Perron dimension $q^2$, and hence it is pointed. Since  $\widehat \C_{ad}
\subseteq \widehat \C_e$, $\widehat \C$ is a nilpotent integral fusion
category.
Then $\FPdim (X) = 1$ or $q$, for all simple object $X$ of $\widehat \C$
\cite[Corollary 5.3]{GN}.
Moreover, since $\widehat \C$ is not pointed, it is of type $(1, q^2; q,
p-1)$ (that is, having $q^2$ non-isomorphic simple objects of dimension $1$ and
$p-1$ non-isomorphic simple objects of dimension $q$.)
\end{remark}

\begin{theorem}\label{impar} Let $2< p<q$ be prime numbers, and let $\mcC$ be an
integral
modular category of dimension $p^{2}q^{2}$. Then  $\C$ is group-theoretical.
\end{theorem}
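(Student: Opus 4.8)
The plan is to show that, under the extra hypothesis $2<p<q$, the one potentially non-group-theoretical alternative left open by Theorem~\ref{p2q2} cannot occur. Suppose, for contradiction, that $\C$ is not group-theoretical. Since $p$ is odd, alternative (2) of Theorem~\ref{p2q2} is excluded, so we are in alternative (3): $\FPdim(\C_{pt})=p$ and $p\mid q-1$. By Proposition~\ref{g-crossed} and the remark following it, $\E=\C_{pt}\cong\Rep(\mathbb{Z}_p)$ is Tannakian, and de-equivariantizing by $\E$ produces the $\mathbb{Z}_p$-crossed braided category $\widehat{\C}$ whose trivial component $\widehat{\C}_e=(\C_{ad})_{\E}$ is a \emph{pointed} modular category of dimension $q^2$; write $\widehat{\C}_e\cong\mathcal{C}(A,Q)$ for a metric group $(A,Q)$ with $|A|=q^2$. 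The strategy is to exhibit a symmetric (in fact Lagrangian) fusion subcategory of $\C$ and then invoke Theorem~\ref{DGNO thm}.

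The crux is to control the induced $\mathbb{Z}_p$-action on $(A,Q)$, which preserves $Q$ because $\mathbb{Z}_p$ acts by braided autoequivalences. First I would check that this action is \emph{free} on $A\smallsetminus\{0\}$. Indeed, the simple objects of $\C_{ad}=\E'$ of dimension $p$ are exactly the equivariantizations of the free $\mathbb{Z}_p$-orbits on $A$, while $\mathbb{Z}_p$-fixed points of $A$ are precisely the forgetful images of invertible objects of $\C_{ad}$; since the only invertibles of $\C_{ad}$ are those of $\C_{pt}=\E$ (all descending to the unit of $\widehat{\C}_e$) and there are $(q^2-1)/p$ simple objects of dimension $p$, the $(q^2-1)/p$ free orbits already exhaust $A\smallsetminus\{0\}$.

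The main step, and the place where the oddness of $p$ is essential, is the analysis of a nondegenerate $\mathbb{Z}_p$-invariant quadratic form admitting such a free automorphism. A generator acts with order $p$ and, being fixed-point free, has eigenvalues that are $p$-th roots of unity different from $1$; as $p$ is odd, they differ from $-1$ as well. Invariance of the associated bilinear form $b$ then forces $b$ to vanish on each eigenline and to pair distinct eigenlines, so the cyclic case $A\cong\mathbb{Z}_{q^2}$ (where the generator acts by a scalar $a$ with $a^2\not\equiv1\pmod q$) admits no nonzero invariant nondegenerate form, and one is left with $A\cong\mathbb{Z}_q\times\mathbb{Z}_q$ and $Q$ hyperbolic. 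The same computation shows each eigenline is isotropic for $Q$, producing a $\mathbb{Z}_p$-stable Lagrangian subgroup $L\le A$ of order $q$, i.e. a $\mathbb{Z}_p$-stable Lagrangian subcategory $\mathcal{L}_0\cong\mathcal{C}(L,0)$ of $\widehat{\C}_e$. For $p=2$ the eigenvalue $-1$ escapes this argument, and indeed the elliptic forms yielding the categories $\mathcal{E}(\zeta,\pm)$ then reappear; this is exactly why the hypothesis $p>2$ is needed.

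Finally I would transfer $\mathcal{L}_0$ back across the de-equivariantization. Since $\mathcal{L}_0$ is $\mathbb{Z}_p$-stable and symmetric, its equivariantization $\mathcal{M}=\mathcal{L}_0^{\mathbb{Z}_p}$ is a fusion subcategory of $\C_{ad}=\E'\subseteq\C$ containing $\E$, of Frobenius--Perron dimension $pq=\sqrt{\FPdim(\C)}$. It is symmetric in $\C$, and being of maximal dimension it is Lagrangian, so $\mathcal{M}'=\mathcal{M}$ and hence $(\mathcal{M}')_{ad}=\mathcal{M}_{ad}\subseteq\mathcal{M}$. Theorem~\ref{DGNO thm} then shows that $\C$ is group-theoretical, contradicting our assumption and proving the theorem. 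The two points demanding the most care are the quadratic-form computation above and the verification that $\mathcal{M}$ is symmetric in $\C$, i.e. that the isotropy of $\mathcal{L}_0$ in $\widehat{\C}_e$ survives equivariantization and lifts to centralization in $\C$; this rests on the fact that, for subcategories containing the central Tannakian part $\E$, the centralizer pairing in $\C$ matches the M\"uger pairing on $\widehat{\C}_e$.
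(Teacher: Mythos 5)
Your argument is correct in substance, but it takes a genuinely different route from the paper's, which disposes of this theorem in three lines: assuming $\C$ is not group-theoretical, Theorem~\ref{p2q2} forces $\FPdim(\C_{pt})=p$ with $p\mid q-1$, Proposition~\ref{g-crossed} produces a \emph{non-group-theoretical} $G$-crossed category $\widehat\C$ of dimension $pq^2$, and the Jordan--Larson classification (\cite[Theorem 1.1]{JL}) then says non-group-theoretical fusion categories of dimension $pq^2$ require $p\mid q+1$, which together with $p\mid q-1$ gives $p=2$, a contradiction. You avoid the appeal to the $pq^2$ classification entirely and instead work inside the trivial component: you identify $\widehat\C_e\cong\C(A,Q)$ with $|A|=q^2$, show the $\mathbb{Z}_p$-action is free on $A\smallsetminus\{0\}$ (your orbit count against the type of $\C_{ad}$ established in case (viii) of the proof of Theorem~\ref{p2q2} --- $p$ invertibles and $(q^2-1)/p$ simples of dimension $p$ --- is correct, noting that for cyclic $\mathbb{Z}_p$ the $H^2$-obstruction to equivariant structures vanishes, so each fixed point would contribute exactly $p$ invertibles), run the invariant-quadratic-form linear algebra to extract a $\mathbb{Z}_p$-stable Lagrangian $L\le A$, and equivariantize to a symmetric subcategory $\M\subseteq\C$ of dimension $pq$ satisfying the criterion of Theorem~\ref{DGNO thm}. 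What your approach buys is self-containedness and a transparent explanation of exactly where $p>2$ enters (the escape eigenvalue $-1$, i.e., the elliptic forms behind $\E(\zeta,\pm)$); what it costs is the two verifications you flag at the end, both of which do go through: in particular, the de-equivariantization functor $\E'\to\widehat\C_e$ is a faithful braided tensor functor (faithful because the regular algebra contains $\one$ as a direct summand), so triviality of the double braiding on $\mathcal{L}_0$ lifts to $\M$, making $\M$ symmetric --- hence Lagrangian, by the dimension count --- in $\C$.

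One step you should make explicit: your eigenline argument tacitly assumes the generator of $\mathbb{Z}_p$ is diagonalizable over $\mathbb{F}_q$, which holds precisely because $p\mid q-1$ (so $x^p-1$ splits over $\mathbb{F}_q$); this divisibility is available to you from alternative (c) of Theorem~\ref{p2q2}, but it is doing real work beyond the oddness of $p$. Indeed, if $p\mid q+1$ a fixed-point-free isometry of odd order $p$ of an \emph{elliptic} form on $\mathbb{Z}_q\times\mathbb{Z}_q$ exists and stabilizes no line at all, so without $p\mid q-1$ there is no invariant Lagrangian and your construction would genuinely fail --- consistent with the paper's remark that non-group-theoretical examples of dimension $p^2q^4$ occur exactly when $p$ is odd and $p\mid q+1$.
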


\begin{proof}
By Theorem \ref{p2q2}, we may assume that $\FPdim (\C_{pt}) = p$ and $p \mid
q-1$.
Keep the notation in Proposition \ref{g-crossed}. The category $\widehat \C$ has
Frobenius-Perron dimension $pq^2$.
Observe that $\widehat \C$ must be group-theoretical. Otherwise, by
\cite[Theorem 1.1]{JL} we should have $p \mid q+1$, leading to the contradiction
$p = 2$. Therefore Proposition \ref{g-crossed} implies that $\C$ is
group-theoretical, as claimed.
\end{proof}

Let $A = \mathbb Z_q \times \mathbb Z_q$, with $2\neq q$ prime, let $\zeta$ be an
elliptic
quadratic form on $A$, and let $\tau = \pm \frac{1}{q}$. Then the associated
Tambara-Yamagami fusion categories
$\TY(A, \zeta, \tau)$ are inequivalent and not group-theoretical. By
\cite[Theorem 1.1]{JL}, these are the only non-group-theoretical fusion
categories of dimension $2q^2$.

Examples of  non-group-theoretical modular categories $\C$ of Frobenius-Perron
dimension $4q^2$ such that $\FPdim (\C_{pt}) = 2$ were constructed in
\cite[Subsection 5.3]{GNN}; these
examples consist of two equivalence classes, denoted
$\E(\zeta, \pm)$, according to the sign
choice of $\tau = \pm \frac{1}{q}$. By construction, there is an embedding of
braided fusion categories $\E(\zeta, \pm) \subseteq \Z(\TY(A, \zeta, \tau))$.

\begin{theorem}\label{2q2} Let $q\neq 2$ be a prime number, and let $\mcC$ be an
integral modular
category such that  $\FPdim (\mcC) = 4q^2$ and $\FPdim\(\mcC_{pt}\)= 2$.
Then either $\C$ is group-theoretical or $\mcC \cong \E(\zeta, \pm)$ as braided fusion categories.
\end{theorem}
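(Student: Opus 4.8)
The plan is to treat the group-theoretical alternative as settled and, assuming $\mcC$ is \emph{not} group-theoretical, to identify $\mcC$ with one of the categories $\E(\zeta,\pm)$ by de-equivariantizing and invoking the classification of non-group-theoretical categories of dimension $2q^2$. Since $\FPdim(\mcC_{pt}) = 2 = |U(\mcC)|$, the group $G = U(\mcC)$ of invertible objects is cyclic of order two and $\FPdim(\mcC) = 2^2q^2$ with $2 < q$, so we are exactly in the setting of Proposition~\ref{g-crossed} with $p = 2$. Applying it, the de-equivariantization $\widehat{\mcC} := \mcC_{\mathbb{Z}_2}$ of $\mcC$ by the (necessarily Tannakian) subcategory $\mcC_{pt} \cong \Rep(\mathbb{Z}_2)$ is a faithfully $\mathbb{Z}_2$-graded, $\mathbb{Z}_2$-crossed braided fusion category of dimension $2q^2$ which is not group-theoretical, whose trivial component $\widehat{\mcC}_e$ is a modular category of dimension $q^2$, and for which there is a braided equivalence $\mcC \boxtimes (\widehat{\mcC}_e)^{\rev} \cong \Z(\widehat{\mcC})$.

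Next I would identify the two pieces of $\widehat{\mcC}$. An integral modular category of dimension $q^2$ has every simple object of dimension $1$ or $q$, and the relation $a + cq^2 = q^2$ (with $a \ge 1$ invertibles and $c$ objects of dimension $q$) forces $c = 0$; hence $\widehat{\mcC}_e$ is pointed, $\widehat{\mcC}_e \cong \mcC(A_0, \zeta_0)$ for an abelian group $A_0$ of order $q^2$ equipped with a nondegenerate quadratic form $\zeta_0$. Because $\widehat{\mcC}$ is a non-group-theoretical fusion category of dimension $2q^2$, the classification in \cite[Theorem 1.1]{JL} recalled above forces $\widehat{\mcC} \cong \TY(A, \zeta, \tau)$ as fusion categories, with $A = \mathbb{Z}_q \times \mathbb{Z}_q$, $\zeta$ an elliptic quadratic form and $\tau = \pm \tfrac{1}{q}$. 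Comparing pointed parts identifies $\widehat{\mcC}_e$ with the canonical pointed subcategory $\Vec_A$ of $\TY(A, \zeta, \tau)$, so $A_0 \cong A \cong \mathbb{Z}_q \times \mathbb{Z}_q$.

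It then remains to show that the $\mathbb{Z}_2$-equivariantization $\mcC$ of $\widehat{\mcC} \cong \TY(A, \zeta, \tau)$ is braided equivalent to $\E(\zeta, \pm)$. I would argue this by a rigidity statement for the equivariant structure: by the construction in \cite{GNN}, $\TY(A, \zeta, \tau)$ carries a $\mathbb{Z}_2$-crossed braided structure with modular trivial component whose equivariantization is $\E(\zeta, \pm)$, realized inside $\Z(\TY(A, \zeta, \tau))$ through the embedding $\E(\zeta, \pm) \subseteq \Z(\TY(A, \zeta, \tau))$. By the extension theory of the nondegenerate braided category $\mcC(A, \zeta_0)$, any such structure is governed by a braided $\mathbb{Z}_2$-action (an element of order at most two in the orthogonal group $O(A, \zeta_0)$) together with the crossed braiding on the nontrivial graded component, which contains the single Tambara-Yamagami object $m$; the fusion rule $m \otimes m \cong \bigoplus_{a \in A} a$ and the hexagon axioms rigidly constrain $\zeta_0$ in terms of $\zeta$ and leave only the sign of $\tau$ free. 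Matching this data with $\mcC = (\widehat{\mcC})^{\mathbb{Z}_2}$ then yields $\mcC \cong \E(\zeta, \pm)$ as braided fusion categories, with the sign of the invariant determined by $\tau$.

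The main obstacle is precisely this last step: establishing that the $\mathbb{Z}_2$-crossed braided (equivalently, the equivariant) structure on $\TY(A, \zeta, \tau)$ is determined up to equivalence by the data $(A, \zeta, \tau)$ and the sign of $\tau$, so that no braided category other than $\E(\zeta, \pm)$ can be its equivariantization. A route that avoids classifying these structures by hand is to push on the center equivalence instead: from $\mcC \boxtimes (\widehat{\mcC}_e)^{\rev} \cong \Z(\TY(A, \zeta, \tau))$ and the parallel decomposition of $\Z(\TY(A, \zeta, \tau))$ furnished by the $\E(\zeta, \pm)$ embedding, one would cancel the common pointed modular factor of dimension $q^2$ by realizing both $(\widehat{\mcC}_e)^{\rev}$ and that factor as modular subcategories of $\Z(\TY(A, \zeta, \tau))$ and taking M\"uger centralizers. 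Making the cancellation legitimate---Deligne products are not cancellative in general---by matching these embedded pointed subcategories up to a braided autoequivalence of the center is itself the delicate point.
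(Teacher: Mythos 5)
Your first half coincides exactly with the paper's proof: assuming $\mcC$ is not group-theoretical, apply Proposition~\ref{g-crossed} with $G=U(\mcC)\cong\mathbb{Z}_2$, note $\widehat\C_e$ is pointed modular of dimension $q^2$, and invoke \cite[Theorem 1.1]{JL} to get $\widehat\C\cong\TY(A,\zeta,\tau)$ as fusion categories together with the braided equivalence $\mcC\boxtimes(\widehat\C_e)^{\rev}\cong\Z(\TY(A,\zeta,\tau))$. But from that point on your proof is incomplete, and you say so yourself: both of your proposed endgames are left unresolved. The first route (showing the $\mathbb{Z}_2$-crossed braided structure on $\TY(A,\zeta,\tau)$ is rigid, so that its equivariantization can only be $\E(\zeta,\pm)$) is not carried out in the paper and would require a classification of braided $\mathbb{Z}_2$-crossed extensions of the pointed modular category $\widehat\C_e$ that neither \cite{GNN} nor anything cited here provides; it is genuinely harder than what is needed. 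The second route (cancelling the pointed factor in the center equivalence) is the paper's route, but the ``delicate point'' you flag --- that Deligne products are not cancellative and that the two embedded pointed subcategories must be matched up --- is precisely the step you have not supplied.

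The paper closes this gap with a short canonical argument that dissolves the cancellation issue via uniqueness, not via an autoequivalence. By the description of $\Z(\TY(A,\zeta,\tau))$ in \cite[Section 4]{GNN}, its group of invertible objects has order $2q^2$; since $q$ is odd, a group of order $2q^2$ has a unique subgroup of order $q^2$, so the center contains a \emph{unique} pointed fusion subcategory $\B$ of dimension $q^2$, and $\B$ is nondegenerate. Consequently any braided equivalence $\mcC\boxtimes(\widehat\C_e)^{\rev}\cong\Z(\TY(A,\zeta,\tau))$ automatically carries the pointed modular factor $(\widehat\C_e)^{\rev}$ onto $\B$ --- there is nothing to match up. Since in a Deligne product of two nondegenerate braided categories each factor is the M\"uger centralizer of the other, the image of $\mcC$ is $\B'$. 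On the other side, the centralizer $\E(\zeta,\pm)'$ inside $\Z(\TY(A,\zeta,\tau))$ has Frobenius--Perron dimension $q^2$, hence is pointed (the same dimension count you used for $\widehat\C_e$), so $\E(\zeta,\pm)'=\B$ by uniqueness, and by double centralizer $\E(\zeta,\pm)=\B'$. Therefore $\mcC\cong\B'=\E(\zeta,\pm)$ as braided fusion categories. Adding this one observation about the invertibles of the center would turn your sketch into the paper's complete proof; as written, the conclusion $\mcC\cong\E(\zeta,\pm)$ is not yet established.
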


\begin{proof} Keep the notation in Proposition \ref{g-crossed} and suppose that
$\C$ is not group-theoretical. Then
$\FPdim (\widehat \C) = 2q^2$ and $\widehat \C$ is not group-theoretical.
Hence, by  \cite[Theorem 1.1]{JL}, $\widehat \C \cong \TY(A, \zeta, \tau)$ as
fusion categories, where $\zeta$ is an elliptic quadratic form on $A = \mathbb
Z_q \times \mathbb Z_q$, and $\tau = \pm \frac{1}{q}$.
In view of Proposition \ref{g-crossed}, we have an equivalence of braided fusion
categories
\begin{equation}\label{prod}
 \mcC \boxtimes (\widehat \C_e)^{\rev} \cong \Z(\TY(A, \zeta, \tau)),
\end{equation}
where $\widehat \C_e$ is a pointed modular category of Frobenius-Perron
dimension $q^2$.

The center of $\TY(A, \zeta, \tau)$ is described in \cite[Section 4]{GNN}. The
group of invertible objects of
$\Z(\TY(A, \zeta, \tau))$ is of order $2q^2$. In particular, $\Z(\TY(A, \zeta,
\tau))$ contains a unique pointed fusion subcategory $\B$ of dimension $q^2$,
which is nondegenerate.
We note that, since the M\" uger centralizer $\E(\zeta, \pm)'$ inside of
$\Z(\TY(A, \zeta, \tau))$ is of dimension $q^2$, whence pointed, this implies
that $\E(\zeta, \pm) = \B'$.

We must have $\B = (\widehat \C_e)^{\rev}$. Hence, by \eqref{prod}, $\mcC
\cong
\B' = \E(\zeta, \pm)$, finishing the proof.
\end{proof}

Using Theorem \ref{impar} and Theorem \ref{2q2},
we can now strengthen Theorem \ref{p2q2}:

\begin{theorem}\label{lastpqtheorem}
Let $p<q$ be primes, and let $\mcC$ be an integral modular category of dimension $p^{2}q^{2}$. Then one of the following is true:
\begin{enumerate}
\item $\mcC$ is group-theoretical.
\item $p=2$, $q=3$, and $\FPdim\(\C_{pt}\)=3$.
\item $p=2$, $\FPdim\(\C_{pt}\)=2$, and $\C \cong \E(\zeta, \pm)$,
as braided fusion categories, for some
elliptic quadratic form $\zeta$ on $\mbbZ_q \times \mbbZ_q$.
\end{enumerate}
\end{theorem}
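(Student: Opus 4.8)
The plan is to take \thmref{p2q2} as the point of departure and simply refine its third alternative using the two intermediate results \thmref{impar} and \thmref{2q2}. Applying \thmref{p2q2} to $\mcC$ produces three possibilities. The first two, namely that $\mcC$ is group-theoretical, and that $p=2$, $q=3$ with $\FPdim(\mcC_{pt})=3$, coincide verbatim with alternatives (1) and (2) of the present statement, so no further argument is needed for them. Thus all the work is concentrated in the remaining alternative of \thmref{p2q2}: the case $p \mid q-1$ and $\FPdim(\mcC_{pt})=p$.

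To handle this alternative I would split according to the parity of $p$. If $p$ is odd, so that $2 < p < q$, then \thmref{impar} applies directly and forces $\mcC$ to be group-theoretical, placing us in alternative (1). If instead $p=2$, then $\FPdim(\mcC_{pt})=2$ and $\FPdim(\mcC)=p^2q^2=4q^2$; since $q>p=2$ is prime it is odd, so $q\neq 2$ and the hypotheses of \thmref{2q2} are satisfied. Invoking that theorem yields either that $\mcC$ is group-theoretical (alternative (1)) or that $\mcC\cong\E(\zeta,\pm)$ as braided fusion categories for an elliptic quadratic form $\zeta$ on $\mbbZ_q\times\mbbZ_q$ (alternative (3)). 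Collecting these outcomes produces exactly the trichotomy asserted.

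Since the substantive content has already been extracted in \thmref{impar} and \thmref{2q2}, there is no genuine obstacle left in this final synthesis; the argument is essentially bookkeeping. The only points that deserve a moment of care are verifying that the divisibility condition $p\mid q-1$ appearing in the third alternative of \thmref{p2q2} is automatically compatible with $p=2$ (it holds since any prime $q>2$ is odd), and checking that the dimension and pointed-part hypotheses specialize correctly so that \thmref{impar} and \thmref{2q2} are genuinely applicable on their respective ranges of $p$. I would expect the resulting proof to be only a few lines long.
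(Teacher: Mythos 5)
Your proposal is correct and matches the paper's own (implicit) derivation: the paper states Theorem \ref{lastpqtheorem} as a direct strengthening of Theorem \ref{p2q2}, with the third alternative of that theorem split by parity of $p$ and resolved by Theorem \ref{impar} for $2<p<q$ and Theorem \ref{2q2} for $p=2$, exactly as you do. Your observations that $p\mid q-1$ is automatic when $p=2$ and that the hypotheses of Theorem \ref{2q2} specialize correctly are the only checks needed.
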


\begin{remark} In view of \cite{JL} there are three equivalence classes of
non-group-theoretical integral fusion categories of Frobenius-Perron
dimension 36. The argument in the proof of Theorem 4.9 implies that a
non-group-theoretical integral modular category that satisfies
Theorem \ref{lastpqtheorem}(b) is equivalent to a fusion
subcategory of the center of one of these.
\end{remark}

In the subsection below, we investigate further the non-group-theoretical categories that
satisfy Theorem \ref{lastpqtheorem}(b).

\begin{subsection}{Modular categories of dimension 36}
\label{subsection 36 modular}

We begin by classifying the possible fusion rules corresponding to
non-group-theoretical modular categories satisfying the conditions of Theorem
\ref{lastpqtheorem}(b).

\begin{proposition}\label{36lemma} Let $\mcC$ be a non-group-theoretical integral
modular category of dimension $36$ with $\Irr(\C_{pt})=\{\one,g,g^2\}$.  Then:
\begin{enumerate}
 \item $\mcC=\mcC_{0}\oplus \mcC_{1}\oplus \mcC_{2}$ as a
$\mathbb{Z}_3$-graded fusion category with respective isomorphism classes of
simple objects
$$\{\one,g,g^2,Y\}\cup\{X,gX,g^2X\}\cup\{X^*,gX^*,g^2X^*\},$$
 where $\FPdim(g^i)=1$, $\FPdim(g^iX)=2$ and
$\FPdim(Y)=3$.

\item Up to relabeling $g\leftrightarrow g^{-1}$ the fusion rules are
determined by:
 \begin{eqnarray}
 g\otimes Y&\cong& Y, \quad Y^{\otimes 2}\cong \one\oplus g\oplus
g^2\oplus 2Y\notag\\
\label{fusion36} g^i\otimes X&\cong& g^iX,\quad Y\otimes X\cong X\oplus gX\oplus
g^2X\\
X\otimes X^*&\cong&
\one\oplus Y\notag
\end{eqnarray}
and either:
\begin{enumerate}
\item[(i)] $X^{\ot 2}\cong X^*\oplus gX^*$, or
\item[(ii)] $X^{\ot 2}\cong g^2X^*\oplus gX^*$.
\end{enumerate}
\end{enumerate}
\end{proposition}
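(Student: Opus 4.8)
The plan is to determine the whole structure in two stages: first fix how the simple objects are distributed among the $\mathbb{Z}_3$-graded components (part (1)), and then read off the fusion rules using Frobenius reciprocity, the $g$-equivariance of $\otimes$, and a single associativity constraint (part (2)).

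For part (1), since $\C$ is integral modular of dimension $36=2^2 3^2$, $\FPdim(X)^2$ divides $36$ for every simple $X$, and dimension $6$ is excluded because $1^2+6^2=37>36$; hence the simple dimensions lie in $\{1,2,3\}$. As $\FPdim(\C_{pt})=3$, a global count $3+4b+9c=36$ forces $b=6$ and $c=1$. The universal grading by $U(\C)\cong\mathbb{Z}_3$ has three components, each of Frobenius--Perron dimension $12$, with trivial component $\C_0=\C_{ad}$. By \thmref{ENO2 nontrivial invertible}, $\C_{ad}$ contains a nontrivial invertible object, and since the invertibles form a group of prime order $3$ this forces $\C_{pt}\subseteq\C_0$; the only solution of $3+4b_e+9c_e=12$ is then $b_e=0,\ c_e=1$, so $\C_0=\{\one,g,g^2,Y\}$ with $\FPdim(Y)=3$. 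The other components carry no invertibles, so $4b_i=12$ gives three objects of dimension $2$ in each. Fixing $X\in\C_1$, its stabilizer in $\mathbb{Z}_3$ under $g\otimes-$ must be trivial, for otherwise $X\otimes X^*$ (of dimension $4$) would contain all of $\one,g,g^2$ and exceed its dimension; hence $\mathbb{Z}_3$ acts freely on the three objects of $\C_1$, giving $\C_1=\{X,gX,g^2X\}$ and dually $\C_2=\{X^*,gX^*,g^2X^*\}$.

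For part (2), most rules are immediate. Since $g\otimes Y$ is the unique object of dimension $3$, we get $g\otimes Y\cong Y$; and $Y$ is self-dual (its dual is again the unique object of dimension $3$), so Frobenius reciprocity together with $g\otimes Y\cong Y$ places each of $\one,g,g^2$ once in $Y^{\otimes 2}$, whence $Y^{\otimes 2}\cong\one\oplus g\oplus g^2\oplus 2Y$ by a dimension count. Because $g\otimes(Y\otimes X)\cong Y\otimes X$, the decomposition of $Y\otimes X$ into $\{X,gX,g^2X\}$ is $g$-invariant, forcing equal multiplicities and hence $Y\otimes X\cong X\oplus gX\oplus g^2X$. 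Triviality of $\Stab_{U(\C)}(X)$ yields $X\otimes X^*\cong\one\oplus Y$, and $g^i\otimes X=g^iX$ is the chosen labeling. The only rule left is $X^{\otimes 2}=\bigoplus_i m_i\, g^iX^*$ with $m_i\geq 0$ and $\sum_i m_i=2$.

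The main obstacle is pinning down $X^{\otimes 2}$, which I would resolve by comparing the two bracketings of $X\otimes X\otimes X^*$. On one side, $X\otimes(X\otimes X^*)\cong X\otimes(\one\oplus Y)\cong 2X\oplus gX\oplus g^2X$, so $X$ occurs with multiplicity $2$. On the other side, using $X^*\otimes X^*\cong(X^{\otimes 2})^*\cong\bigoplus_i m_i\, g^{-i}X$, the multiplicity of $X$ in $g^iX^*\otimes X^*$ equals the multiplicity of $g^{-i}X$ in $X^*\otimes X^*$, namely $m_i$; hence the multiplicity of $X$ in $(X^{\otimes 2})\otimes X^*$ is $\sum_i m_i^2$. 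Therefore $\sum_i m_i^2=2=\sum_i m_i$ with $m_i$ non-negative integers, which forces exactly two of the $m_i$ to equal $1$ and the third to vanish. Finally, the relabeling $g\leftrightarrow g^{-1}$ identifies the cases $m_1=0$ and $m_2=0$ (yielding option~(i), $X^{\otimes 2}\cong X^*\oplus gX^*$) and fixes the case $m_0=0$ (option~(ii), $X^{\otimes 2}\cong g^2X^*\oplus gX^*$); these remain genuinely distinct because the invariant $m_0=N_{X,X}^{X^*}$ is unchanged by that relabeling.
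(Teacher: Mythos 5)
Your proof is correct and follows essentially the same route as the paper: dimension counting over the faithful $\mathbb{Z}_3$-grading (with $\C_{pt}\subseteq\C_{ad}$ and trivial stabilizers for the dimension-$2$ objects) for part (1), and Frobenius reciprocity plus an associativity count to pin down $X^{\otimes 2}$ for part (2). The only minor variation is in excluding a multiplicity $2$ in $X^{\otimes 2}$: the paper assumes $X^{\otimes 2}\cong 2g^iX^*$ and derives a contradiction by computing $(X\otimes X^*)^{\otimes 2}$ in two ways, whereas you compare the two bracketings of $X\otimes X\otimes X^*$ to obtain $\sum_i m_i^2=\sum_i m_i=2$ directly, which is an equivalent consistency check.
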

\begin{proof}
 First note that $\mcC$ is faithfully $\mbbZ_3$-graded, so that each graded
component has dimension $12$ and simple objects can only have dimension $1,2$
or $3$.  Solving the Diophantine equations provided by the dimension formulas
(observing that $\mcC$ is not pointed) we see that
$\mcC_{pt}\subset\mcC_0=\mcC_{ad}$ which gives us the dimensions and objects
described in (a).

The fusion rules given in (\ref{fusion36}) are determined by
the
dimensions and the symmetry rules for the fusion matrices. The remaining fusion
rules will be determined from $X^{\otimes 2}$.
Clearly $X^{\otimes 2}\in\mcC_{2}$, so
$$X^{\otimes 2}\cong a_0X^*\oplus a_1gX^*\oplus a_2g^2X^*$$
where $\sum_i a_i=2$.  We claim no $a_i=2$.  For suppose
$X^{\otimes 2}\cong 2g^iX^*$ for some $0\leq i\leq 2$.  Then $(X^*)^{\otimes
2}\cong 2g^{-i}X$ and so
$$(X\otimes X^*)^{\otimes 2}\cong 4(g^i\otimes g^{-i})\otimes X\otimes X^*\cong
4 (\one\oplus Y).$$ On the other hand $X\otimes X^*\cong \one\oplus Y$ so
$$(X\otimes X^*)^{\otimes 2}\cong (\one\oplus Y)^{\otimes 2}\cong
2\one\oplus g\oplus g^2\oplus 4Y,$$
a contradiction.
Therefore $X^{\otimes 2}$ is multiplicity free.  This leaves 3 possibilities:
1) $a_0=1$ and $a_1=1$ or 2) $a_0=1$ and $a_2=1$ or 3) $a_0=0$ and $a_1=a_2=1$.
The first two are
equivalent under the labeling change $g\leftrightarrow g^2$ proving (b).
\end{proof}

\begin{remark}
The non-group-theoretical integral modular categories
$\mcC(\mathfrak{sl}_3,q,6)$ have fusion rules as in Proposition \ref{36lemma} (b)(i). The category $\mcC(\mathfrak{sl}_3,q,6)$ is obtained from the quantum group $U_q(\mathfrak{sl}_3)$ with $q^2$ a primitive $6$th root of unity.  The data of this category and a proof of non-group-theoreticity may be found in \cite[Example 4.14]{NR}.
\end{remark}


Next, we classify, up to equivalence of fusion categories, modular categories
realizing the fusion rules described in Proposition \ref{36lemma}.
To this end, we will need the notion of twist-equivalence, defined next.
Let $G$ be a finite group, and let $e$ denote its identity element.
Given a $G$-graded fusion category $\mcC:=(\mcC,\otimes, \alpha)$ and a 3-cocycle $\eta\in Z^3(G,\mathbb{C}^*)$, the natural isomorphism $$\alpha_{X_\sigma,X_\tau,X_\rho}^\eta=\eta(\sigma,\tau,\rho)\alpha_{X_\sigma,X_\tau,X_\rho}, \  \  \  (X_\sigma\in \mcC_\sigma, X_\tau\in \mcC_\tau,  X_\rho\in \mcC_\rho, \sigma,\tau,\rho \in G),$$
defines a new fusion category $\mcC^\eta:=(\mcC,\otimes, \alpha^\eta)$. The fusion categories $\mcC$ and $\mcC^\eta$ are  equivalent as $G$-graded fusion categories if and only if the cohomology class of $\eta$ is zero, see \cite[Theorem 8.9]{ENO3}.  We shall say that two $G$-graded fusion categories $\mcC$ and $\mathcal{D}$ are {\bf twist-equivalent} if there is a $\eta\in Z^3(G,\mathbb{C}^*)$ such that $\mcC^\alpha$ is $G$-graded equivalent to $\mathcal{D}$ (compare with \cite{KW}).


If  $(\C, c)$ is a $G$-graded strict braided fusion category, then each  $g\in
(\mcC_e)_{pt}$ defines a $\mcC_e$-bimodule equivalence
$L_g:\mcC_\sigma\to \mcC_\sigma, X\mapsto g\otimes X$ with natural isomorphism
$c_{g,V}\otimes \id_X:L_g(V\otimes X)\to V\otimes L_g(X)$, for every $\sigma\in
G$.

Let $\mcC=\mcC(\mathfrak{sl}_3,q,6)$  and consider the normalized symmetric
2-cocycle $\chi:\mathbb{Z}_3\times \mathbb{Z}_3 \to \pi$  defined by
$\chi(1,1)=\chi(1,2)=g^2, \chi(2,2)=1$, where
$\pi=\Irr(\mcC_{pt})= U(\mcC)=\{\one, g,g^2\}$.
We define a new tensor product $\bar{\otimes}:\mcC\boxtimes \mcC\to \mcC$  as $$\bar{\otimes}\vert_{\mcC_{i}\boxtimes \mcC_{j}}=L_{\chi(i,j)}\circ \otimes.$$
Since $\text{H}^4(\mathbb{Z}_3,\mathbb{C}^*)=0$, it follows by \cite[Theorem 8.8]{ENO3} that we can find isomorphisms $$\omega_{i,j,k}:\chi(i+j,k) \otimes \chi(i,j)\to \chi(i,j+k)\otimes \chi(j,k)$$ such that  the natural isomorphisms
$$
\hat{\alpha}_{X_i,X_j,X_k}^\omega=(\id_{\chi(i,j+k)}\otimes c_{\chi(j,k),X_i}\otimes \id_{X_k})\circ(\omega_{i,j,k}\otimes \id_{X_i\otimes X_j\otimes X_k}),
$$define an associator with respect to $\bar{\otimes}$ and we get  a new $\mathbb Z_3$-graded fusion category 
\[
\bar{\mcC}(\mathfrak{sl}_3,q,6):=(\mcC,\bar{\otimes},\hat{\alpha}^\omega).
\]
\begin{remark}
The notation $\bar{\mcC}(\mathfrak{sl}_3,q,6)$ is ambiguous, because we are not specifying $\omega$. However, $\bar{\mcC}(\mathfrak{sl}_3,q,6)$ is unique up to  twist-equivalence.
\end{remark}

\begin{theorem} \label{twist-equivalence}
Let $\A$ be a  fusion category.
\begin{itemize}
\item[(a)] If $\A$ has fusion rules given by Proposition \ref{36lemma} (b)(i),
then $\A$ is twist-equivalent  to $\mcC(\mathfrak{sl}_3,q,6)$ for some choice
of $q$.

\item[(b)] If $\A$ is braided and has fusion rules given by Proposition
\ref{36lemma} (b)(ii), then $\A$ is  twist-equivalent to
$\bar{\mcC}(\mathfrak{sl}_3,q,6)$ for some choice of $q$.
\item[(c)] Any fusion category twist-equivalent to  $\mcC(\mathfrak{sl}_3,q,6)$ or $\bar{\mcC}(\mathfrak{sl}_3,q,6)$ is non-group-theoretical.
\end{itemize}
\end{theorem}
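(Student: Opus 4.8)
The plan is to treat each category $\A$ as a faithful $\mathbb{Z}_3$-extension of its adjoint subcategory and to run the extension theory of \cite{ENO3}. By Proposition \ref{36lemma}(a) the trivial component $\A_{ad}=\A_0$ is an integral fusion category of dimension $12$ whose Grothendieck ring is that of $\Rep(A_4)$. Being integral of dimension $2^2\cdot 3$ it is group-theoretical, and the first task is to pin it down up to equivalence and show that it is equivalent to $\mcC(\mathfrak{sl}_3,q,6)_{ad}$ for a suitable choice of $q$. Once $\A_0$ is fixed, the graded pieces $\A_1,\A_2$ are invertible $\A_0$-bimodule categories whose type is forced by the fusion rules in \eqref{fusion36}, so that the whole structure is encoded by a monoidal $2$-functor $\underline{\mathbb{Z}_3}\to\underline{\mathrm{BrPic}}(\A_0)$, i.e.\ a group homomorphism $c\colon\mathbb{Z}_3\to\mathrm{BrPic}(\A_0)$ together with the higher coherence data of \cite{ENO3}.

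The decisive point is cohomological: $H^2(\mathbb{Z}_3,\mathbb{C}^*)=0$, $H^4(\mathbb{Z}_3,\mathbb{C}^*)=0$ and $H^3(\mathbb{Z}_3,\mathbb{C}^*)\cong\mathbb{Z}_3$. Thus, by \cite[Theorems 8.8 and 8.9]{ENO3}, for a fixed $c$ the lift is unique, the obstruction to building an associator vanishes, and the resulting $\mathbb{Z}_3$-extensions of $\A_0$ form a torsor over $H^3(\mathbb{Z}_3,\mathbb{C}^*)$. This torsor action is precisely the twist $\mcC\mapsto\mcC^\eta$ that defines twist-equivalence. Hence any two $\mathbb{Z}_3$-extensions of $\A_0$ realizing the same homomorphism $c$ are automatically twist-equivalent. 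I would then observe that the fusion rules distinguish exactly two homomorphisms: the rules (b)(i) produce the same $c$ as $\mcC(\mathfrak{sl}_3,q,6)$, whereas the rules (b)(ii) produce the $c$ obtained by modifying the tensor structure by the cocycle $\chi$, which is by construction the homomorphism realized by $\bar{\mcC}(\mathfrak{sl}_3,q,6)$. This gives (a); for (b) the braided structure is used to match the full extension data (the construction of $\bar{\mcC}$ itself invokes the braiding $c_{g,-}$), so that a braided $\A$ with rules (b)(ii) lands in the twist-equivalence class of $\bar{\mcC}(\mathfrak{sl}_3,q,6)$.

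For (c) the plan is to prove that group-theoreticity is invariant under twist-equivalence and then to invoke the non-group-theoreticity of $\mcC(\mathfrak{sl}_3,q,6)$ recorded in \cite[Example 4.14]{NR}. Concretely, I would show that for a $\mathbb{Z}_3$-extension the property of being group-theoretical depends only on the homomorphism $c$ (equivalently, on whether $c$ lands in the subgroup of bimodule categories admitting a common pointed Morita representative) and is insensitive to the $H^3(\mathbb{Z}_3,\mathbb{C}^*)$-twist that distinguishes twist-equivalent members. Since passing from $\mcC$ to $\bar{\mcC}$ is itself effected by the tensor/cocycle modification above and all remaining freedom is the twist, both families inherit the non-group-theoretical status of $\mcC(\mathfrak{sl}_3,q,6)$.

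I expect the main obstacle to be twofold. First, identifying $\A_0$ up to equivalence and computing enough of $\mathrm{BrPic}(\A_0)$ to confirm that the fusion rules (b)(i) and (b)(ii) single out exactly the two homomorphisms realized by $\mcC$ and $\bar{\mcC}$, with no further possibilities beyond the twist freedom; this is the technical core of (a) and (b). Second, and most delicate, is the twist-invariance of group-theoreticity needed for (c): one must rule out that twisting the associator by a nontrivial class in $H^3(\mathbb{Z}_3,\mathbb{C}^*)$ could manufacture a pointed Lagrangian subcategory in the Drinfeld center where $\mcC(\mathfrak{sl}_3,q,6)$ has none. I would address this by tracking how the center and its pointed subcategories transform under the twist, which is where I would concentrate the effort.
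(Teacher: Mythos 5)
Your extension-theoretic framework is the right general setting, but it contains a concrete error that breaks the argument: you claim that, once the homomorphism $c\colon\mathbb{Z}_3\to\mathrm{BrPic}(\A_0)$ is fixed, the lift is unique because $H^2(\mathbb{Z}_3,\mathbb{C}^*)=0$, so that same-$c$ extensions form a torsor over $H^3(\mathbb{Z}_3,\mathbb{C}^*)$ and are therefore automatically twist-equivalent. In the classification of \cite{ENO3} the second layer of data is a torsor over $H^2\bigl(\mathbb{Z}_3,\mathrm{Inv}(\Z(\A_0))\bigr)$, not over $H^2(\mathbb{Z}_3,\mathbb{C}^*)$, and $\mathrm{Inv}(\Z(\A_0))$ has nontrivial $3$-torsion (it contains the group $\{\one,g,g^2\}$ lifted to the center via the braiding), so this $H^2$ does not vanish. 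The paper's own construction exhibits the failure: $\mcC(\mathfrak{sl}_3,q,6)$ and $\bar{\mcC}(\mathfrak{sl}_3,q,6)$ have identical trivial components and identical graded pieces as $\A_0$-bimodule categories --- hence the \emph{same} $c$ --- yet different fusion rules ((b)(i) versus (b)(ii)), so they cannot be twist-equivalent, since twisting the associator by $\eta\in Z^3(\mathbb{Z}_3,\mathbb{C}^*)$ never changes fusion rules. Your related assertion that the rules (b)(ii) ``produce the $c$ obtained by modifying the tensor structure by the cocycle $\chi$'' mislabels where the difference lives: the $\chi$-modification changes the tensor product functors (the $H^2$-layer), not the underlying homomorphism into $\mathrm{BrPic}(\A_0)$. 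With this step removed, your proofs of (a) and (b) collapse, and you would additionally still owe the unaddressed identification of $\A_0$ up to equivalence (not merely its Grothendieck ring) and enough of $\mathrm{BrPic}(\A_0)$ to pin down $c$.

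For contrast, the paper does not classify extensions at all. Part (a) is a direct citation of Kazhdan--Wenzl \cite[Theorem $A_\ell$]{KW}, which classifies monoidal categories with the $\mathfrak{sl}_3$-type fusion rules of (b)(i) outright. Part (b) uses the braiding on $\A$ concretely: twisting the tensor product by the $2$-cocycle $\chi^{-1}$ with values in the invertibles (the isomorphisms $c_{g,V}\otimes\id$ make $L_g$ a bimodule functor, which is exactly why the braided hypothesis is needed) converts the rules (b)(ii) into (b)(i), reduces to (a), and then the $\chi$-modification is applied back. Part (c) avoids your ``delicate'' analysis of how centers transform under twisting by invoking \cite[Theorem 1.2]{G2}: group-theoreticity of a $G$-graded category depends only on $\A_e$ as a fusion category and on the $\A_\sigma$ as $\A_e$-bimodule categories, all of which are unchanged both by associator twists and by the $\chi$-modification; non-group-theoreticity then transfers from $\mcC(\mathfrak{sl}_3,q,6)$ \cite[Example 4.14]{NR}. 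Your instinct for (c) --- that group-theoreticity depends only on $c$ --- is essentially this criterion, but it needs the citation or a proof, not the center-tracking you sketch.
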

\begin{proof}
(a) This follows from the classification results in \cite[Theorem $A_\ell$]{KW}.\\

\noindent (b) Since $\bar{\mcC}(\mathfrak{sl}_3,q,6)= \mcC(\mathfrak{sl}_3,q,6)$ as
abelian categories, their simple objects are the same. However, since the tensor
product is different the duals of  simple objects can be different, so we shall
use the following notation for the simple objects of
$\bar{\mcC}(\mathfrak{sl}_3,q,6)$:
\[
\one,g,g^2, Y, gX, g^2X,\bar{X}, g\bar{X},g^2\bar{X},
\]
where $\bar{X}=X^*$, with respect to the original tensor
product of $\mcC(\mathfrak{sl}_3,q,6)$.

Next,  we investigate the  fusion rules of $\bar{\mcC}(\mathfrak{sl}_3,q,6)$. First note that $X^*\in \mcC_{2}$ and $X\bar{\otimes}\bar{X}=g^2\oplus Y$, so $X^*=g\bar{X}$ and $X\bar{\otimes}X^*= \one\oplus Y$. Since $\chi$ is normalized, the only important fusion rule
that changes is
$$
X\bar{\otimes} X= g^2\otimes( \bar{X}\oplus g\bar{X})=g^2\bar{X}\oplus \bar{X}=gX^*\oplus g^2X^*.
$$
Note that the fusion rules of $\bar{\mcC}(\mathfrak{sl}_3,q,6)$ are the same as Proposition \ref{36lemma} (b)(ii).

If $\A$ is a braided fusion category with fusion rules given by Proposition
\ref{36lemma} (b)(ii), then using the 2-cocycle $\chi^{-1}$, we can construct a
fusion category $\mathcal{D}$ with the same fusion rules of 
$\mcC(\mathfrak{sl}_3,q,6)$. By (a), there exists a $q$ such that $\D$ is
twist-equivalent to $\mcC(\mathfrak{sl}_3,q,6)$ and again using the 2-cocycle
$\chi$ on $\mcC(\mathfrak{sl}_3,q,6)$ we get a fusion category  twist-equivalent
to $\A$.\\

\noindent (c) Let $G$ be a finite group, and let $e$ denote its identity element.
In \cite[Theorem 1.2]{G2}, it was proved that a $G$-graded fusion category
$\A$ is group-theoretical if and only if there is a pointed $\A_e$-module
category $\M$ such that $\A_{\sigma}\boxtimes_{\A_e}\M\cong \M$ as $\A_e$-module
categories for all $\sigma \in G$. If $\A$ is twist-equivalent to
$\mcC(\mathfrak{sl}_3,q,6)$ or $\bar{\mcC}(\mathfrak{sl}_3,q,6)$, then we have $\A_e= \mcC(\mathfrak{sl}_3,q,6)_e$ as fusion categories and $\A_\sigma=
\mcC(\mathfrak{sl}_3,q,6)_\sigma$ as $\A_e$-bimodule category. Since
$\mcC(\mathfrak{sl}_3,q,6)$ is non-group-theoretical \cite[Example 4.14]{NR},  $\A$ is
also non-group-theoretical.
\end{proof}


\end{subsection}
\begin{subsection}{Conclusions}
In this section, we have classified integral modular categories of dimension 
$p^2q^2$ up to equivalence of braided fusion categories 
with the exception of non-group-theoretical $\mathbb{Z}_3$-graded
36-dimensional modular categories, which
are classified only up to equivalence of fusion categories.

It can be shown that the category $\bar{\mcC}(\mathfrak{sl}_3,q,6)$
is not of the form $\Rep(H)$ for a Hopf
algebra $H$ using the same technique as in \cite[Theorem 5.27]{GHR}.  
More generally, a
non-group-theoretical fusion category $\C$ with fusion rules as in 
Proposition \ref{36lemma}
cannot be equivalent to a category of the form $\Rep H$, $H$ a Hopf algebra:
If $\C \cong \Rep(H)$ for some Hopf algebra $H$,
then since $\C$ admits a faithful $\mathbb Z_3$-grading,
we would have a central exact sequence $k^{\mathbb Z_3} \to H \to \overline H$,
where $\dim \overline H = 12$ and $\Rep \overline H \cong \C_0$. The
classification of semisimple Hopf algebras of dimension 12 implies that
$\overline H$ is a group algebra. Hence $k^{\mathbb Z_3} \to H \to \overline
H$ is an abelian exact sequence and therefore $H$ is group-theoretical,
a contradiction.

The
existence of a modular structure on the category 
$\bar{\mcC}(\mathfrak{sl}_3,q,6)$
will be discussed in a future
work, 
but for the interested reader we provide the
modular data where $q=e^{\pi i/3}$:

$$S:=\begin{pmatrix} 1&1&1&3&2&2&2&2&2&2
\\ \noalign{\medskip}1&1&1&3&2\,{q}^{2}&2\,{q}^{-2}&2\,{q}^{2}&2\,{q}^
{-2}&2\,{q}^{2}&2\,{q}^{-2}\\ \noalign{\medskip}1&1&1&3&2\,{q}^{-2}&2
\,{q}^{2}&2\,{q}^{-2}&2\,{q}^{2}&2\,{q}^{-2}&2\,{q}^{2}
\\ \noalign{\medskip}3&3&3&-3&0&0&0&0&0&0\\ \noalign{\medskip}2&2\,{q}
^{2}&2\,{q}^{-2}&0&2\,{q}^{-1}&2\,q&2\,q&2\,{q}^{-1}&-2&-2
\\ \noalign{\medskip}2&2\,{q}^{-2}&2\,{q}^{2}&0&2\,q&2\,{q}^{-1}&2\,{q
}^{-1}&2\,q&-2&-2\\ \noalign{\medskip}2&2\,{q}^{2}&2\,{q}^{-2}&0&2\,q&
2\,{q}^{-1}&-2&-2&2\,{q}^{-1}&2\,q\\ \noalign{\medskip}2&2\,{q}^{-2}&2
\,{q}^{2}&0&2\,{q}^{-1}&2\,q&-2&-2&2\,q&2\,{q}^{-1}
\\ \noalign{\medskip}2&2\,{q}^{2}&2\,{q}^{-2}&0&-2&-2&2\,{q}^{-1}&2\,q
&2\,q&2\,{q}^{-1}\\ \noalign{\medskip}2&2\,{q}^{-2}&2\,{q}^{2}&0&-2&-2
&2\,q&2\,{q}^{-1}&2\,{q}^{-1}&2\,q\end{pmatrix}$$

\bigskip

$$T:=\mathrm{Diag}(1,1,1,-1,{q}^{2},{q}^{2},1,1,{q}^{-2},{q}^{-2})
$$
\end{subsection}

\end{section}


\end{document}